\newcommand{\bi}{\begin{itemize}}  
\newcommand{\ei}{\end{itemize}}     
\newcommand{\bc}{\begin{center}}  
\newcommand{\ec}{\end{center}}     
\newcommand{\ls}[1]
   {\dimen0=\fontdimen6\the\font \lineskip=#1\dimen0
   \advance\lineskip.5\fontdimen5\the\font \advance\lineskip-\dimen0
   \lineskiplimit=.9\lineskip \baselineskip=\lineskip
   \advance\baselineskip\dimen0 \normallineskip\lineskip
   \normallineskiplimit\lineskiplimit \normalbaselineskip\baselineskip
   \ignorespaces }
\numberwithin{equation}{section}
\newtheorem{lemma}{Lemma}[section]
\newtheorem{theorem}[lemma]{Theorem}
\newtheorem{corollary}[lemma]{Corollary}
\newtheorem{example}[lemma]{Example}
\newtheorem{remark}[lemma]{Remark}
\newcommand{\ilim} {\mathop{\rm lim\,inf}}
\def\S{\mathbb{S}}
\def\R{\overline{\mathbb{R}}}
\def\M{\mathcal{M}}
\def\B{\mathcal{B}}
\title{Uniform Fatou's Lemma}
\begin{document}

\maketitle

\begin{center}
 Eugene~A.~Feinberg \footnote{Department of Applied Mathematics and
Statistics,
 State University of New York at Stony Brook,
Stony Brook, NY 11794-3600, USA, eugene.feinberg@sunysb.edu}, \
Pavlo~O.~Kasyanov \footnote{Institute for Applied System
Analysis, National Technical University of Ukraine ``Kyiv
Polytechnic Institute'', Peremogy ave., 37, build, 35, 03056, Kyiv,
Ukraine,\ kasyanov@i.ua}, and {Michael~Z.~Zgurovsky} \footnote{National Technical University of Ukraine ``Kyiv
Polytechnic Institute'', Peremogy ave., 37, build, 1, 03056, Kyiv,
Ukraine,\
 zgurovsm@hotmail.com}
 \end{center}


%
%
%
%

\begin{abstract}

Fatou's lemma is a classic fact in real analysis that states that  the limit inferior of integrals of functions is greater than or equal to the integral of the inferior limit.
This paper introduces a stronger inequality that holds uniformly for integrals on measurable subsets of a measurable space. The necessary and sufficient condition, under which this inequality holds for a sequence of finite measures converging in total variation, is provided.{ This statement  is called the uniform Fatou's lemma, and it holds
under the minor assumption that all the integrals are well-defined. The uniform Fatou's lemma improves the classic Fatou's lemma in the following directions: the uniform Fatou's lemma states a more precise inequality, it provides the necessary and sufficient condition, and it deals with variable measures.  Various corollaries of the uniform Fatou's lemma are formulated.
The examples in this paper demonstrate that: (a) the uniform Fatou's lemma may indeed provide a more accurate inequality than the classic Fatou's lemma; (b) the uniform Fatou's lemma does not hold if convergence of measures in total variation is relaxed to setwise convergence.}
\\ {\bf Keywords:} finite measure, convergence, Fatou's lemma, Radon-Nikodym derivative\\
{\bf MSC 2010}: 28A33, 60B10 \\
{\bf PACS}: 02.30.Cj, 05.10.Gg
\end{abstract}

%
%
%








\section{Introduction and Main Results}



 Fatou's lemma is an important fact in real analysis that has significant applications in various fields.  It provides the inequality that relates the limit inferior of integrals of functions and the integral of the inferior limit. This paper introduces the uniform Fatou's lemma for a sequence of finite measures converging in total variation, describes the necessary and sufficient condition for the validity of this statement, and provides corollaries and counter-examples.

For a measurable space $(\S,\Sigma)$, let  $\M(\S)$ denote the
\textit{family of finite measures} on $(\S,\Sigma).$ Let $\mathbb{R}$ be a real line and
$\R:=\mathbb{R}\cup\{\pm\infty\}.$ A function $f:\S\to \R$ is called  measurable if $\{s\in\S:\, f(s)<\alpha\}\in\Sigma$ for
each $\alpha\in\mathbb{R}.$
%
%
 For $\mu\in\M(\S)$ and a measure $\nu$ on $\S$, consider the \textit{distance in total variation}
\[
{\rm dist}(\mu,\nu):=\sup\left\{|\int_\S f(s)\mu(ds) -\int_\S f(s)\nu(ds)|  :\
f:\S\to [-1,1]\mbox{ is measurable} \right.\bigg\}.
\]

We recall that a sequence of finite measures $\{\mu^{(n)}\}_{n=1,2,\ldots}$ on
$\S$ \textit{converges in total variation} to a measure $\mu$ on $\S$ if
$\lim_{n\to\infty}{\rm dist}(\mu^{(n)},\mu)=0.$ Of course, if a sequence of finite measures $\{\mu^{(n)}\}_{n=1,2,\ldots}$ on
$\S$ converges in total variation to a measure $\mu$ on $\S$, then $\mu\in \M(\S)$ and $\mu^{(n)}(\S)\to \mu(\S)$ as $n\to\infty$.

For 
$\mu\in \M(\S)$ consider
the vector space $L^1(\S;\mu)$ of all  measurable functions
$f:\S\to\R$, whose absolute values have finite integrals, that is,
$\int_\S |f(s)| \mu(ds) < +\infty$.
%
The following theorem is the main result of this paper.

\begin{theorem}{\rm(Necessary and Sufficient Conditions for Uniform Fatou's Lemma for Variable Measures and Unbounded Below Functions)}
\label{cfl}
Let $(\S,\Sigma)$ be a measurable space,  $\{
\mu^{(n)}\}_{n=1,2,\ldots}\subset \M(\S)$ {converge} in  total
variation to a measure $ \mu$ on $\S$,  $f\in L^1(\S;\mu)$, and   $f^{(n)}
\in L^1(\S;\mu^{(n)})$ for each $n=1,2,\ldots\,.$
Then the  inequality
\begin{equation}\label{eq:cfl1}
\ilim\limits_{n\to\infty}\inf\limits_{S\in\Sigma}\left(\int_{S}f^{(n)}(s)
\mu^{(n)}(ds)- \int_{S}f(s) \mu(ds)\right)\ge 0
\end{equation}
holds if and only if the following two statements hold:
\begin{enumerate}
\item[(i)] for each $\varepsilon>0$
\begin{equation}\label{eq:cfl2}
 \mu(\{s\in\S\,:\, f^{(n)}(s)\le f(s)-\varepsilon\})\to 0\mbox{ as }n\to\infty,
\end{equation}
and, therefore, there exists a subsequence $\{f^{(n_k)}\}_{k=1,2,\ldots}\subseteq\{f^{(n)}\}_{n=1,2,\ldots}$
such that
\begin{equation}\label{eq:cfl3}
\ilim\limits_{k\to\infty}f^{(n_k)}(s)\ge f(s)\quad  \mbox{ for }\mu\mbox{-a.e.
}s\in\S;
\end{equation}
\item[(ii)] the inequality
\begin{equation}\label{eq:ui}
\ilim\limits_{K\to+\infty}\inf\limits_{n=1,2,\ldots
}\int_{\S}f^{(n)}(s){\bf I}\{s\in\S\,:\, f^{(n)}(s)\le -K\}
\mu^{(n)}(ds)\ge 0
\end{equation}
holds.
\end{enumerate}
\end{theorem}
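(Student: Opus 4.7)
The plan is to prove the two implications separately; the common technical difficulty is that $f\in L^1(\S;\mu)$ does not guarantee $\mu^{(n)}$-integrability of $f$, since $f$ may take the value $+\infty$ on a $\mu$-null set that has positive $\mu^{(n)}$-measure.

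For sufficiency [(i) and (ii) imply \eqref{eq:cfl1}], fix $\varepsilon>0$. Using (ii) I choose $K\ge 1$ so large that $\int_\S f^{(n)}(s){\bf I}\{f^{(n)}(s)\le -K\}\mu^{(n)}(ds)\ge -\varepsilon$ for all large $n$; using $f\in L^1(\S;\mu)$ I enlarge $K$ and pick $M$ so that $\int_\S |f(s)|{\bf I}\{f(s)<-K\text{ or }f(s)>M\}\mu(ds)<\varepsilon$; and set $\bar f:=(f\wedge M)\vee(-K)$, a bounded function. For arbitrary $S\in\Sigma$, decompose $S$ by the set $A_n:=\{s\in\S:f^{(n)}(s)\le f(s)-\varepsilon\}$: by (i) $\mu(A_n)\to 0$, and then $\mu^{(n)}(A_n)\to 0$ by total variation convergence. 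On $S\setminus A_n$ the pointwise bound $f^{(n)}\ge f-\varepsilon$, together with (ii) applied on $\{f^{(n)}<-K\}$ and total variation convergence applied to the bounded function $\bar f$ (so that $|\int\bar f(\mu^{(n)}-\mu)(ds)|\le \max(K,M)\,{\rm dist}(\mu^{(n)},\mu)\to 0$), yields a lower bound for $\int_{S\setminus A_n}f^{(n)}(s)\mu^{(n)}(ds)$ that is within $O(\varepsilon)$ of $\int_{S\setminus A_n}f(s)\mu(ds)$. On $S\cap A_n$, the $\mu^{(n)}$-integral of $f^{(n)}$ is bounded below by combining (ii) on $\{f^{(n)}\le -K\}$ with the trivial bound $-K\mu^{(n)}(A_n)\to 0$ on the complement, while $\int_{S\cap A_n}|f(s)|\mu(ds)\to 0$ by absolute continuity of the $\mu$-integral of $|f|$. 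Assembling these estimates yields $\int_S f^{(n)}(s)\mu^{(n)}(ds)-\int_S f(s)\mu(ds)\ge -C\varepsilon$ uniformly in $S\in\Sigma$ for $n$ large.

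For necessity [\eqref{eq:cfl1} implies (i) and (ii)], I argue by contradiction. To prove (i), suppose \eqref{eq:cfl2} fails, so along a subsequence $\mu(A_{n_k})\ge \delta>0$ for $A_n:=\{f^{(n)}\le f-\varepsilon_0\}$ and some $\varepsilon_0,\delta>0$. I pick $M$ with $\mu(\{|f|>M\})<\delta/2$ (possible since $f\in L^1(\S;\mu)$) and let $S_k:=A_{n_k}\cap\{|f|\le M\}$, so $\mu(S_k)\ge\delta/2$. On $S_k$ the function $f$ is bounded, hence $\mu^{(n_k)}$-integrable, and the defining inequality $f^{(n_k)}\le f-\varepsilon_0$ yields
\[\int_{S_k}f^{(n_k)}(s)\mu^{(n_k)}(ds)-\int_{S_k}f(s)\mu(ds)\le \int_{S_k}f(s)(\mu^{(n_k)}-\mu)(ds)-\varepsilon_0\mu^{(n_k)}(S_k).\]
The right-hand integral is bounded in absolute value by $M\,{\rm dist}(\mu^{(n_k)},\mu)\to 0$; \eqref{eq:cfl1} forces the left-hand side $\ge -\eta_k\to 0$; so $\mu^{(n_k)}(S_k)\to 0$, whence $\mu(S_k)\to 0$ by total variation convergence, contradicting $\mu(S_k)\ge \delta/2$. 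The subsequence in \eqref{eq:cfl3} follows from \eqref{eq:cfl2} by the classical passage from convergence in measure to almost-everywhere convergence along a subsequence. For (ii), if it fails then $S_k:=\{f^{(n_k)}\le -K_k\}$ satisfies $\int_{S_k}f^{(n_k)}(s)\mu^{(n_k)}(ds)<-\varepsilon_0$ for some $\varepsilon_0>0$ and $K_k\to\infty$; applying \eqref{eq:cfl1} gives $\int_{S_k}f(s)\mu(ds)<-\varepsilon_0/2$ for large $k$, but the inclusion $S_k\subset\{f^{(n_k)}\le f-\varepsilon_0\}\cup\{f\le -K_k+\varepsilon_0\}$, combined with (i) (already proved) and $f\in L^1(\S;\mu)$, forces $\mu(S_k)\to 0$ and hence $\int_{S_k}f(s)\mu(ds)\to 0$, a contradiction.

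The main obstacle is the asymmetric integrability setup noted above, which invalidates the otherwise natural identity $\int_S f^{(n)}\mu^{(n)}(ds)-\int_S f\mu(ds)=\int_S(f^{(n)}-f)\mu^{(n)}(ds)+\int_S f(\mu^{(n)}-\mu)(ds)$. The unifying device is the two-sided truncation $\bar f=(f\wedge M)\vee(-K)$, combined with: control of the $f$-tails outside $[-K,M]$ via $f\in L^1(\S;\mu)$; control of the $f^{(n)}$-tail below $-K$ via (ii); control of the transition set $A_n$ via (i) and total variation convergence; and uniform control of bounded integrals via ${\rm dist}(\mu^{(n)},\mu)\to 0$.
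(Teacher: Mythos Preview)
Your proof is correct and closely parallels the paper's own argument, which is organized as four lemmas: (\ref{eq:cfl2})$\Rightarrow$(\ref{eq:cfl3}), (\ref{eq:cfl1})$\Rightarrow$(i), (\ref{eq:cfl1})$+$(i)$\Rightarrow$(ii), and (i)$+$(ii)$\Rightarrow$(\ref{eq:cfl1}). Your necessity proofs of (i) and (ii) are essentially the paper's Lemmas~2.2 and~2.3: the paper restricts to $\{|f|\le 4C/\delta^*\}$ exactly as you restrict to $\{|f|\le M\}$, and your inclusion $S_k\subset\{f^{(n_k)}\le f-\varepsilon_0\}\cup\{f\le -K_k+\varepsilon_0\}$ is the paper's inclusion $\S_{f^{(n)}\le -K}\subset\S_{f\le 1-K}\cup\S_{f-f^{(n)}\ge 1}$ with a different constant. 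One minor point you should make explicit in the proof of (ii): the indices $n_k$ can be taken to tend to infinity, since for each fixed $n$ the tail integral $\int_{\{f^{(n)}\le -K\}}f^{(n)}\,d\mu^{(n)}\to 0$ as $K\to\infty$ by $f^{(n)}\in L^1(\S;\mu^{(n)})$; without $n_k\to\infty$ you cannot invoke \eqref{eq:cfl1}.

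The sufficiency direction is where you diverge somewhat from the paper. The paper decomposes $S$ by the level sets of $f^{(n)}$, namely $\{f^{(n)}\le -K\}$, $\{|f^{(n)}|<K\}$, $\{f^{(n)}\ge K\}$, and handles three terms $I_1,I_2,I_3$ separately (bounded integrand against varying measure; varying integrand against fixed measure; the upper tail). You instead decompose $S$ primarily by the ``bad set'' $A_n=\{f^{(n)}\le f-\varepsilon\}$, using the two-sided truncation $\bar f=(f\wedge M)\vee(-K)$ as a bridge between $f^{(n)}$ and $f$ on $A_n^c$. Both routes rely on the same ingredients---(ii) for the lower tail of $f^{(n)}$, integrability of $f$ for the $f$-tails, (i) to shrink $A_n$, and total variation to swap measures under a bounded integrand---and both produce a uniform $-C\varepsilon$ lower bound. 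Your decomposition is arguably more transparent because it isolates $A_n$ directly; the paper's level-set decomposition is more symmetric and avoids introducing the auxiliary cutoff $M$ on the upper side (it handles $\{f^{(n)}\ge K\}$ via a separate estimate $I_3$ instead).
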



\begin{remark}\label{23}{\rm Let $(\S,\Sigma)$ be a measurable space, $\{f^{(n)},f\}_{n=1,2,\ldots}
$ be a sequence of measurable functions, $ \mu  $ be a measure on $\S$. We note that if (\ref{eq:cfl2}) holds for
each $\varepsilon>0$, then (\ref{eq:cfl3}) holds; see Lemma~\ref{lem:1}.}
\end{remark}

We recall that the classic  Fatou's lemma can be formulated in the following form. \medskip\\
{\bf Fatou's lemma.} {\it  Let $(\S,\Sigma)$ be a measurable space, $\mu$ be a measure on $(\S,\Sigma)$ and $\{f,f^{(n)}\}_{n=1,2,\ldots}$ be a sequence of measurable nonnegative functions. Then the {inequality}
\begin{equation}\label{eq:FLcl1}
\ilim\limits_{n\to\infty} f^{(n)}(s)\ge f(s)\mbox{ for $\mu$-a.e. } s\in \S
\end{equation}
{implies}
\begin{equation}\label{eq:FLcl2}
\ilim\limits_{n\to\infty} \int_\S f^{(n)}(s)\mu(ds)\ge \int_\S f(s)\mu(ds).
\end{equation}}

{Note that there are generalizations of Fatou's lemma to functions that can take negative values.  For example, the conclusions of Fatou's lemma hold if all the functions have a common integrable minorant.}

We recall that a sequence of
measures $\{ \mu^{(n)}\}_{n=1,2,\ldots}$ from $\M(\S)$
\textit{converges setwise (weakly)} to $ \mu\in\M(\S)$ if for each bounded
 measurable (bounded continuous) function $f$ on $\S$
\[\int_\S f(s) \mu^{(n)}(ds)\to \int_\S f(s) \mu(ds) \qquad {\rm as \quad
}n\to\infty.
\]
{If} $\{f,f^{(n)}\}_{n=1,2,\ldots}\subset L^1(\S;\mu)$, then for $\mu^{(n)}=\mu$, $n=1,2,\ldots,$ inequality (\ref{eq:cfl1}) is the uniform version of  inequality (\ref{eq:FLcl2}) of the Fatou's lemma.
There are generalized versions of Fatou's lemmas for weakly and setwise converging  sequences of measures;
see Royden~\cite[p. 231]{Ro}, Serfoso~\cite{Se}, Feinberg et al. \cite{TVP}, and references in \cite{TVP}. 
 Theorem~\ref{cfl} provides necessary and sufficient conditions
for inequality (\ref{eq:cfl1}), when variable finite measures $\{\mu^{(n)}\}_{n=1,2,\ldots}$ converge
 in total variation to $\mu$. 
We note that: (a) inequality (\ref{eq:FLcl1}) implies statement (i) from Theorem~\ref{cfl}, but not vice versa (see Example~\ref{exa0});  (b) inequality  (\ref{eq:ui}) always holds for nonnegative functions $\{f^{(n)}\}_{n=1,2,\ldots}$; and (c) the assumption that  the convergence of $\{
\mu^{(n)}\}_{n=1,2,\ldots}\subset \M(\S)$ to a measure $ \mu$ on $\S$ takes place in  total
variation  is essential and cannot be relaxed to setwise convergence; see Examples~{\ref{exa2} -- \ref{exa3}}.

Theorem~\ref{cfl} implies the following two corollaries.

  %
%
%

%

\begin{corollary}{\rm(Necessary and Sufficient Conditions for
Uniform Fatou's Lemma for Variable Measures and Nonnegative Functions)}
\label{cor:cflnongen}
Let $(\S,\Sigma)$ be a measurable space, the sequence $\{
\mu^{(n)}\}_{n=1,2,\ldots}\subset \M(\S)$ {converge} in  total
variation to a measure $ \mu$ on $\S$, $f^{(n)}
\in L^1(\S;\mu^{(n)})$, $n=1,2,\ldots,$ be  nonnegative functions and $f\in L^1(\S;\mu)$.     Then inequality (\ref{eq:cfl1}) holds
if and only if statement (i) from Theorem~\ref{cfl} takes place. 
\end{corollary}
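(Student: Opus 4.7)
The plan is to derive this corollary as an immediate consequence of Theorem~\ref{cfl}, by observing that nonnegativity of the functions $f^{(n)}$ forces condition~(ii) of that theorem to hold automatically. Given this reduction, the equivalence between inequality~(\ref{eq:cfl1}) and statement~(i) follows directly from Theorem~\ref{cfl}.

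First I would verify that condition~(ii) is trivially satisfied. Since $f^{(n)}(s)\ge 0$ for every $s\in\S$ and every $n=1,2,\ldots$, for any $K>0$ the set $\{s\in\S\,:\, f^{(n)}(s)\le -K\}$ is empty, so the integrand
\[
f^{(n)}(s)\,\mathbf{I}\{s\in\S\,:\, f^{(n)}(s)\le -K\}
\]
vanishes identically on $\S$. Consequently, each integral in (\ref{eq:ui}) equals $0$, and thus
\[
\ilim\limits_{K\to+\infty}\inf\limits_{n=1,2,\ldots}\int_{\S}f^{(n)}(s)\mathbf{I}\{s\in\S\,:\, f^{(n)}(s)\le -K\}\mu^{(n)}(ds)=0\ge 0,
\]
which is (\ref{eq:ui}).

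With condition~(ii) established unconditionally, Theorem~\ref{cfl} states that (\ref{eq:cfl1}) holds if and only if both (i) and (ii) hold; since (ii) is automatic under the nonnegativity hypothesis, this collapses to the equivalence between (\ref{eq:cfl1}) and condition~(i), which is exactly the claim of the corollary. There is no real obstacle here: the corollary is essentially a bookkeeping specialization of the main theorem, and the only substantive point is the elementary observation that the lower-tail term (\ref{eq:ui}) is vacuous when the $f^{(n)}$ are nonnegative.
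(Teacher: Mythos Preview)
Your proposal is correct and follows essentially the same approach as the paper's own proof, which simply notes that Corollary~\ref{cor:cflnongen} follows directly from Theorem~\ref{cfl} because inequality~(\ref{eq:ui}) holds for nonnegative functions $\{f^{(n)}\}_{n=1,2,\ldots}$. Your version merely spells out in a bit more detail why the lower-tail integrals vanish, which is fine.
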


\begin{corollary}{\rm(Necessary and Sufficient Conditions for Uniform Fatou's Lemma for Unbounded Below Functions)}
\label{cor:cfl}
Let $(\S,\Sigma)$ be a measurable space,  $\mu\in \M(\S)$, and  $\{f,\,
f^{(n)}\}_{n=1,2,\ldots}\subset L^1(\S;\mu)$. Then the inequality
\begin{equation}\label{eq:cor:cfl1}
\ilim\limits_{n\to\infty}\inf\limits_{S\in\Sigma}\left(\int_{S}f^{(n)}(s)
\mu(ds)- \int_{S}f(s) \mu(ds)\right)\ge 0\end{equation}
holds if and only if  statement (i) from Theorem~\ref{cfl} takes place and
\begin{equation}\label{eq:uic1}
\ilim\limits_{K\to+\infty}\inf\limits_{n=1,2,\ldots
}\int_{\S}f^{(n)}(s){\bf I}\{s\in\S\,:\, f^{(n)}(s)\le -K\}
\mu(ds)\ge 0.
\end{equation}
\end{corollary}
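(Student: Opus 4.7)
The plan is to obtain Corollary~\ref{cor:cfl} as an immediate specialization of Theorem~\ref{cfl} to the case in which all the variable measures coincide. Concretely, set $\mu^{(n)} := \mu$ for every $n = 1,2,\ldots$. Then ${\rm dist}(\mu^{(n)}, \mu) = 0$ for every $n$, so the constant sequence $\{\mu^{(n)}\}_{n=1,2,\ldots}$ trivially converges to $\mu$ in total variation, and the hypotheses $f \in L^1(\S;\mu)$ and $f^{(n)} \in L^1(\S;\mu^{(n)}) = L^1(\S;\mu)$ match exactly the integrability assumptions of Theorem~\ref{cfl}.

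With this substitution one checks that the statements of Theorem~\ref{cfl} collapse to those of the corollary. Inequality~(\ref{eq:cfl1}) reduces, after replacing $\mu^{(n)}$ by $\mu$ in the first integrand, precisely to~(\ref{eq:cor:cfl1}). Condition~(i) in Theorem~\ref{cfl} is formulated solely in terms of the limit measure $\mu$ and is therefore unchanged. Condition~(\ref{eq:ui}) becomes~(\ref{eq:uic1}) because $\mu^{(n)}(ds) = \mu(ds)$ under the specialization. Hence the equivalence asserted in Theorem~\ref{cfl} translates verbatim into the equivalence asserted in Corollary~\ref{cor:cfl}.

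Since this is a pure specialization, no auxiliary arguments are required: the hard analytic work (Radon--Nikodym truncation arguments, the subsequence extraction in the second half of~(i), and the necessity/sufficiency analysis of the tail condition) was already carried out in the proof of Theorem~\ref{cfl}. There is consequently no genuine obstacle to overcome here; the only point deserving explicit verification is the trivial one that a constant sequence of measures converges to itself in total variation and that the corresponding two function spaces $L^1(\S;\mu^{(n)})$ and $L^1(\S;\mu)$ are literally the same, so that the integrability hypotheses of the theorem are fulfilled under the weaker hypotheses listed in the corollary.
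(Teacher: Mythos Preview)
Your proposal is correct and takes essentially the same approach as the paper: the paper's proof simply states that Corollary~\ref{cor:cfl} follows directly from Theorem~\ref{cfl}, and your write-up makes explicit the obvious specialization $\mu^{(n)}=\mu$ under which inequality~(\ref{eq:cfl1}) becomes~(\ref{eq:cor:cfl1}) and condition~(\ref{eq:ui}) becomes~(\ref{eq:uic1}).
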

\begin{remark}\label{rem -}
{\rm

For each $a\in\mathbb{R}$ we denote $a^+:=\max\{a,0\}$ and $a^-:=a^+-a$. Note that $a=a^+-a^-$ and
$|a|=a^++a^-$.
For a measure $\mu$ on $\S$ and functions $f,g\in  L^1(\S;\mu),$
\[
\inf\limits_{S\in\Sigma}\left(\int_{S}g(s)
\mu(ds)- \int_{S}f(s) \mu(ds)\right)=-\int_{\S}(g(s)- f(s) )^-\mu(ds)\le 0.
\]
Therefore, inequality (\ref{eq:cor:cfl1}) is equivalent to
\[
\lim\limits_{n\to\infty}\int_{\S}(f^{(n)}(s)- f(s) )^-\mu(ds)= 0.
\]
}\end{remark}

Each of the Corollaries~\ref{cor:cflnongen} and \ref{cor:cfl} implies the following statement.

\begin{corollary}{\rm(Necessary and Sufficient Conditions for Uniform Fatou's Lemma for Nonnegative Functions)}
\label{cor:cflnon}
Let  $(\S,\Sigma)$ be a measurable space,  $\mu\in \M(\S)$,  $f\in L^1(\S;\mu)$, and  $\{f^{(n)}\}_{n=1,2,\ldots}\subset L^1(\S;\mu)$ be a sequence of nonnegative functions. Then
inequality (\ref{eq:cor:cfl1}) holds
if and only if statement (i) from Theorem~\ref{cfl} takes place.
%
\end{corollary}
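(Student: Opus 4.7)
The plan is to derive Corollary~\ref{cor:cflnon} as a direct specialization of one of the two preceding corollaries. I would present two equivalent routes and choose the cleaner one.

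First route: apply Corollary~\ref{cor:cflnongen} with the constant sequence $\mu^{(n)} := \mu$ for all $n$. Since ${\rm dist}(\mu,\mu) = 0$, this sequence trivially converges to $\mu$ in total variation, and then inequality (\ref{eq:cfl1}) reduces verbatim to inequality (\ref{eq:cor:cfl1}). All hypotheses of Corollary~\ref{cor:cflnongen} (nonnegativity of $f^{(n)}$, integrability of $f^{(n)}$ and $f$ with respect to $\mu$) are satisfied by assumption, so the equivalence with statement (i) of Theorem~\ref{cfl} follows at once.

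Second route: apply Corollary~\ref{cor:cfl} and verify that, for nonnegative $f^{(n)}$, condition (\ref{eq:uic1}) is automatic. Indeed, for every $K > 0$ and every $n$, the set $\{s\in\S : f^{(n)}(s) \le -K\}$ is empty since $f^{(n)} \ge 0 > -K$, so the inner integral in (\ref{eq:uic1}) equals $0$. Hence the $\ilim$ in (\ref{eq:uic1}) is $0 \ge 0$, and the characterization in Corollary~\ref{cor:cfl} collapses to statement (i) of Theorem~\ref{cfl} alone.

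There is essentially no obstacle here; the only step requiring a sentence of justification is the observation, in the second route, that the unboundedness-below condition (\ref{eq:uic1}) is vacuous for nonnegative sequences, and in the first route, that constant sequences converge in total variation. I would present the first route, since it gives the result in a single line: specialize $\mu^{(n)} \equiv \mu$ in Corollary~\ref{cor:cflnongen} and read off the conclusion.
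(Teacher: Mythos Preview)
Your proposal is correct and matches the paper's approach exactly: the paper's proof is the one-line statement that Corollary~\ref{cor:cflnon} follows directly from Corollary~\ref{cor:cflnongen}, i.e., your first route with $\mu^{(n)}\equiv\mu$. The paper also notes just before stating the corollary that either Corollary~\ref{cor:cflnongen} or Corollary~\ref{cor:cfl} yields the result, so your second route is acknowledged as well.
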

\begin{remark}\label{re:cfla1111}
{\rm
Under the assumptions of Corollary~\ref{cor:cflnon},  inequality (\ref{eq:cor:cfl1}) is equivelent to $(f^{(n)}- f)^- \mathop{\to}\limits^\mu 0$ as $n\to\infty.$ This follows from   Remark~\ref{rem -}, the dominated convergence theorem, Chebyshev's inequality,  and because each function $(f^{(n)}- f)^-$, $n=1,2,\ldots,$ is majorated above by $f^+\in L^1(\S,\mu)$. Statement (i) from Theorem~\ref{cfl} holds if and only if $(f^{(n)}- f)^- \mathop{\to}\limits^\mu 0$, $n\to\infty.$ Therefore,  Corollary~\ref{cor:cflnon} also follows from classic results. Furthermore, the assumption, that the measure $\mu$ is finite, can be
omitted from Corollary~\ref{cor:cflnon}.}

\end{remark}

\begin{corollary}\label{cdct}{\rm(Necessary and Sufficient Conditions for Uniform Dominated Convergence Theorem for Variable Measures)}
Let $(\S,\Sigma)$ be a measurable space,  the sequence $\{
\mu^{(n)}\}_{n=1,2,\ldots}\subset \M(\S)$ {converge} in  total
variation to a measure $ \mu$ on $\S$,  $f\in L^1(\S;\mu)$, and   $f^{(n)}
\in L^1(\S;\mu^{(n)})$ for each $n=1,2,\ldots\,.$
Then the equality
\begin{equation}\label{eq:cdct1}
\lim\limits_{n\to\infty}\sup\limits_{S\in\Sigma}\left|\int_{S}f^{(n)}(s) \mu^{(n)}(ds)-
\int_{S}f(s) \mu(ds)\right|= 0
\end{equation}
holds if and only if the following two statements hold:
\begin{enumerate}
\item[(i)] the sequence $\{f^{(n)}\}_{n=1,2,\ldots}$ converges in measure $ \mu $ to
$f$, and, therefore, there is a subsequence $\{f^{(n_k)}\}_{k=1,2,\ldots}\subseteq\{f^{(n)}\}_{n=1,2,\ldots}$
that  converges $ \mu$-a.e. to $f;$
\item[(ii)] the following equality holds:
\begin{equation}\label{eq:uicorleb}
\lim\limits_{K\to+\infty}\sup\limits_{n=1,2,\ldots
}\int_{\S} |f^{(n)}(s)|{\bf I}\{s\in\S\,:\, |f^{(n)}(s)|\ge K\}
\mu^{(n)}(ds)= 0.
\end{equation}
\end{enumerate}
\end{corollary}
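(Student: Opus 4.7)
The plan is to deduce Corollary~\ref{cdct} from Theorem~\ref{cfl} by applying the latter twice, to the pair $(f^{(n)},f)$ and to the pair $(-f^{(n)},-f)$. Set $g_n(S):=\int_S f^{(n)}(s)\,\mu^{(n)}(ds)-\int_S f(s)\,\mu(ds)$ for $S\in\Sigma$. Since $\emptyset\in\Sigma$ and $g_n(\emptyset)=0$, we always have $\sup_{S\in\Sigma}g_n(S)\ge 0\ge\inf_{S\in\Sigma}g_n(S)$, and
\[
\sup_{S\in\Sigma}|g_n(S)|=\max\Bigl\{\sup_{S\in\Sigma}g_n(S),\; -\inf_{S\in\Sigma}g_n(S)\Bigr\}.
\]
Thus (\ref{eq:cdct1}) is equivalent to the conjunction of $\liminf_{n\to\infty}\inf_{S\in\Sigma}g_n(S)\ge 0$ and $\liminf_{n\to\infty}\inf_{S\in\Sigma}(-g_n(S))\ge 0$. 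The former is inequality (\ref{eq:cfl1}) for the sequence $(f^{(n)},f)$, and, since $-g_n(S)=\int_S(-f^{(n)})\,\mu^{(n)}(ds)-\int_S(-f)\,\mu(ds)$, the latter is (\ref{eq:cfl1}) for the sequence $(-f^{(n)},-f)$.

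Apply Theorem~\ref{cfl} to each of these two pairs. For $(f^{(n)},f)$ the characterization yields, for every $\varepsilon>0$,
\[
\mu(\{s\in\S:\,f^{(n)}(s)\le f(s)-\varepsilon\})\to 0\qquad\text{as}\ n\to\infty,
\]
together with
\[
\liminf_{K\to+\infty}\inf_{n\ge 1}\int_\S f^{(n)}(s){\bf I}\{f^{(n)}(s)\le -K\}\,\mu^{(n)}(ds)\ge 0.
\]
For $(-f^{(n)},-f)$ the same theorem yields the symmetric statements
\[
\mu(\{s\in\S:\,f^{(n)}(s)\ge f(s)+\varepsilon\})\to 0\qquad\text{as}\ n\to\infty,
\]
and
\[
\liminf_{K\to+\infty}\inf_{n\ge 1}\int_\S(-f^{(n)}(s)){\bf I}\{f^{(n)}(s)\ge K\}\,\mu^{(n)}(ds)\ge 0.
\]

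It remains to reassemble the four conditions. The two convergence-in-measure statements combine, via $\{|f^{(n)}-f|\ge\varepsilon\}\subseteq\{f^{(n)}\le f-\varepsilon\}\cup\{f^{(n)}\ge f+\varepsilon\}$, into the convergence of $f^{(n)}$ to $f$ in measure $\mu$; the existence of an a.e.\ convergent subsequence is a standard consequence. The two tail conditions combine as follows: on $\{f^{(n)}\le -K\}$ one has $f^{(n)}=-|f^{(n)}|$, while on $\{f^{(n)}\ge K\}$ one has $-f^{(n)}=-|f^{(n)}|$, so each of the tail liminf inequalities is equivalent to the vanishing of a one-sided truncated $L^1$-tail, and together they are equivalent to
\[
\lim_{K\to+\infty}\sup_{n\ge 1}\int_\S|f^{(n)}(s)|{\bf I}\{|f^{(n)}(s)|\ge K\}\,\mu^{(n)}(ds)=0,
\]
which is exactly (\ref{eq:uicorleb}). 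The conjunction of these two combined statements is therefore equivalent to (\ref{eq:cdct1}).

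There is no serious obstacle here; the argument is entirely bookkeeping. The only small point to watch is the sign conventions when translating the $\liminf$ tail inequality of Theorem~\ref{cfl}(ii) applied to $-f^{(n)}$ back into a statement about the upper tail of $f^{(n)}$, and the elementary observation that $\sup_{S\in\Sigma}|g_n(S)|\to 0$ if and only if both $\sup_{S\in\Sigma}g_n(S)\to 0$ and $\inf_{S\in\Sigma}g_n(S)\to 0$, which hinges on $g_n(\emptyset)=0$.
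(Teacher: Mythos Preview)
Your proof is correct and follows exactly the same approach as the paper, which simply states that Theorem~\ref{cfl}, applied to the functions $\{f,f^{(n)}\}_{n=1,2,\ldots}$ and $\{-f,-f^{(n)}\}_{n=1,2,\ldots}$, yields the corollary. You have merely supplied the bookkeeping details that the paper leaves implicit: the reduction of \eqref{eq:cdct1} to the two one-sided inequalities via $g_n(\emptyset)=0$, and the recombination of the two pairs of conditions (i) and (ii) from Theorem~\ref{cfl} into convergence in measure and the two-sided uniform integrability condition \eqref{eq:uicorleb}.
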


{ We remark that,} 
{for uniformly} bounded functions $\{f^{(n)}\}_{n=1,2,\ldots},$ condition~(ii) {from} Corollary~\ref{cdct} always holds and therefore is not needed.
{The necessary part of Corollary~\ref{cdct} for probability measures $\{
\mu^{(n)},\, \mu\}_{n=1,2,\ldots}$ and uniformly bounded
measurable functions $\{f^{(n)},\, f\}_{n=1,2,\ldots},$ defined on a standard Borel space $\S,$   was introduced in { Feinberg et al.~\cite[Theorem~5.5]{POMDP}.  {This necessary condition}  was used {in Feinberg et al.~\cite{POMDP, POMDP2}} for  the analysis of control problems with incomplete observations, and it can be interpreted as a converse to a version of Lebesgue's dominated convergence theorem for a sequence of measures converging in total variation. The understanding of{ Feinberg et al.} \cite[Theorem~5.5]{POMDP} was the starting point for formulating and investigating the uniform Fatou's lemma.}

\begin{corollary}\label{cor:cdct}{\rm(Necessary and Sufficient Conditions for Uniform Dominated Convergence Theorem)}
Let $(\S,\Sigma)$ be a measurable space,  $\mu\in \M(\S)$, and  $\{f,\,
f^{(n)}\}_{n=1,2,\ldots}\subset L^1(\S;\mu)$. Then the equality
\begin{equation}\label{cor:eq:cdct1}
\lim\limits_{n\to\infty}\sup\limits_{S\in\Sigma}\left|\int_{S}f^{(n)}(s)\mu(ds)-\int_{S}f(s)
\mu(ds)\right|= 0
\end{equation}
holds if and only if
statement (i) from Corollary~\ref{cdct} holds and
the sequence $\{f^{(n)}\}_{n=1,2,\ldots}$ is uniformly integrable, that is,
\begin{equation}\label{eq:uieqLebP}
\lim\limits_{K\to+\infty}\sup\limits_{n=1,2,\ldots
}\int_{\S} |f^{(n)}(s)|{\bf I}\{s\in\S\,:\, |f^{(n)}(s)|\ge K\}
\mu(ds)= 0.
\end{equation}
\end{corollary}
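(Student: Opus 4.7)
The plan is to deduce Corollary~\ref{cor:cdct} from Corollary~\ref{cor:cfl} by applying that corollary twice: once to the pair $\{f^{(n)},f\}$ and once to the pair $\{-f^{(n)},-f\}$. The bridge between the one-sided inequality (\ref{eq:cor:cfl1}) and the two-sided equality (\ref{cor:eq:cdct1}) rests on the identity
\[
\sup_{S\in\Sigma}\left|\int_S u(s)\,\mu(ds)\right|=\max\left\{\int_\S u^+(s)\,\mu(ds),\ \int_\S u^-(s)\,\mu(ds)\right\},
\]
applied to $u=f^{(n)}-f$. Combined with Remark~\ref{rem -}, this shows that (\ref{cor:eq:cdct1}) is equivalent to the simultaneous validity of $\int_\S(f^{(n)}-f)^-\mu(ds)\to 0$ and $\int_\S(f^{(n)}-f)^+\mu(ds)\to 0$, that is, to (\ref{eq:cor:cfl1}) holding both for $\{f^{(n)},f\}$ and for $\{-f^{(n)},-f\}$.

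Next I would translate the two instances of Corollary~\ref{cor:cfl} into the conditions of the present corollary. Statement~(i) of Theorem~\ref{cfl} for $\{f^{(n)},f\}$ says that $\mu(\{f^{(n)}\le f-\varepsilon\})\to 0$ for every $\varepsilon>0$; applied to $\{-f^{(n)},-f\}$ it says that $\mu(\{f^{(n)}\ge f+\varepsilon\})\to 0$ for every $\varepsilon>0$. Together these two conditions are equivalent to $\mu(\{|f^{(n)}-f|\ge\varepsilon\})\to 0$ for every $\varepsilon>0$, that is, to convergence of $\{f^{(n)}\}$ to $f$ in measure $\mu$, which is exactly statement~(i) of Corollary~\ref{cdct}.

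Finally I would show that the two instances of (\ref{eq:uic1}) combine into (\ref{eq:uieqLebP}). Since $f^{(n)}(s)\mathbf{I}\{f^{(n)}(s)\le -K\}\le 0$, the inequality (\ref{eq:uic1}) for $\{f^{(n)},f\}$ is equivalent to
\[
\lim_{K\to\infty}\sup_{n}\int_\S |f^{(n)}(s)|\mathbf{I}\{f^{(n)}(s)\le -K\}\,\mu(ds)=0,
\]
and the same inequality for $\{-f^{(n)},-f\}$ is equivalent to the analogous equality with the indicator $\mathbf{I}\{f^{(n)}(s)\ge K\}$. Adding these two produces (\ref{eq:uieqLebP}), and conversely (\ref{eq:uieqLebP}) dominates each of them. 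The hardest bookkeeping step is keeping the signs straight while dualizing between Corollary~\ref{cor:cfl} and its $-f$ version; aside from that, the argument is essentially a pair of applications of Corollary~\ref{cor:cfl} followed by elementary manipulations of positive and negative parts.
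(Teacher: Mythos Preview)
Your proposal is correct and follows exactly the approach of the paper, which simply states that Corollary~\ref{cor:cfl} applied to $\{f,f^{(n)}\}$ and to $\{-f,-f^{(n)}\}$ yields the result. You have merely supplied the routine details (using Remarks~\ref{rem -} and~\ref{rem -Leb}) that the paper leaves implicit.
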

\begin{remark}\label{rem -Leb}
{\rm Under the assumptions of Corollary~\ref{cor:cdct}
\[
\begin{aligned}
\sup\limits_{S\in\Sigma}&\left|\int_{S}f^{(n)}(s)\mu(ds)-\int_{S}f(s)
\mu(ds)\right|\\
&=\max\left\{\int_{\S}(f^{(n)}(s)- f(s) )^-\mu(ds),\int_{\S}(f^{(n)}(s)- f(s) )^+\mu(ds) \right\}\ge 0.
\end{aligned}
\]
Therefore,  equality (\ref{cor:eq:cdct1}) is equivalent to
\[
\lim\limits_{n\to\infty}\int_{\S}|f^{(n)}(s)-f(s)|
\mu(ds)= 0,\]
and Corollary~\ref{cor:cdct} coincides with the classic  criterion of
strong convergence in  $L^1(\S;\mu)$.}
\end{remark}

%

{The following two corollaries describe the relation between convergence properties of a sequence of finite signed measures $\{ { \Tilde \mu}^{(n)}\}_{n=1,2,\ldots}$ and the sequence of their Radon-Nikodym derivatives $\{ \frac{d{ \Tilde \mu}^{(n)}}{d\mu^{(n)}}\}_{n=1,2,\ldots}$ with respect to  finite measures $\{\mu^{(n)}\}_{n=1,2,\ldots}$  converging in total variation.} 

\begin{corollary}\label{cor:RNin}
Let $(\S,\Sigma)$ be a measurable space,  $\{\mu^{(n)}\}_{n=1,2,\ldots}\subset \M(\S)$, {$\mu$ be a measure on $\S$,}  and $\{{ \Tilde \mu}, { \Tilde \mu}^{(n)}\}_{n=1,2,\ldots}$ be a sequence of finite signed measures on $\S$. Assume that ${ \Tilde \mu}\ll\mu$ and ${ \Tilde \mu}^{(n)}\ll\mu^{(n)}$ for each $n=1,2,\ldots\,. $ If the sequence $\{
\mu^{(n)}\}_{n=1,2,\ldots}$ converges in  total
variation to $ \mu$, then the  inequality
\[
\ilim\limits_{n\to\infty}\inf\limits_{S\in\Sigma}\left({ \Tilde \mu}^{(n)}(S)- { \Tilde \mu}(S)\right)\ge 0
\]
holds if and only if the following two statements hold:
\begin{enumerate}
\item[(i)] for each $\varepsilon>0$
\[
 \mu(\{s\in\S\,:\, \frac{d{ \Tilde \mu}^{(n)}}{d\mu^{(n)}}(s)\le \frac{d{ \Tilde \mu}}{d\mu}(s)-\varepsilon\})\to 0\mbox{ as }n\to\infty,
\]
and, therefore, there exists a subsequence $\{\frac{d{ \Tilde \mu}^{(n_k)}}{d\mu^{(n_k)}}\}_{k=1,2,\ldots}\subseteq\{\frac{d{ \Tilde \mu}^{(n)}}{d\mu^{(n)}}\}_{n=1,2,\ldots}$
such that
\[
\ilim\limits_{k\to\infty}\frac{d{ \Tilde \mu}^{(n_k)}}{d\mu^{(n_k)}}(s)\ge \frac{d{ \Tilde \mu}}{d\mu}(s)\quad  \mbox{ for }\mu\mbox{-a.e.
}s\in\S;
\]
\item[(ii)] {the  inequality}
\[
\ilim\limits_{K\to+\infty}\inf\limits_{n=1,2,\ldots
}{ \Tilde \mu}^{(n)}(\{s\in\S\,:\, \frac{d{ \Tilde \mu}^{(n)}}{d\mu^{(n)}}(s)\le -K\})
\ge 0.
\]
{holds.}
\end{enumerate}
\end{corollary}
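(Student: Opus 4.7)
The plan is to apply Theorem~\ref{cfl} with $f^{(n)}:=\frac{d\Tilde\mu^{(n)}}{d\mu^{(n)}}$ and $f:=\frac{d\Tilde\mu}{d\mu}$, and to observe that under this substitution the assertion of Corollary~\ref{cor:RNin} becomes, line by line, the assertion of Theorem~\ref{cfl}.

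First I would verify the hypotheses of Theorem~\ref{cfl}. Since $\Tilde\mu$ is a finite signed measure with $\Tilde\mu\ll\mu$, the Radon--Nikodym theorem yields $\int_\S|f(s)|\mu(ds)=|\Tilde\mu|(\S)<+\infty$, so $f\in L^1(\S;\mu)$; analogously $f^{(n)}\in L^1(\S;\mu^{(n)})$ for every $n=1,2,\ldots\,.$ The convergence of $\{\mu^{(n)}\}_{n=1,2,\ldots}$ to $\mu$ in total variation is assumed.

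Next I would rewrite the quantities in the corollary in terms of integrals: for every $n$ and every $S\in\Sigma$,
\[
\Tilde\mu^{(n)}(S)-\Tilde\mu(S)=\int_S f^{(n)}(s)\mu^{(n)}(ds)-\int_S f(s)\mu(ds),
\]
so the inequality $\ilim\limits_{n\to\infty}\inf\limits_{S\in\Sigma}(\Tilde\mu^{(n)}(S)-\Tilde\mu(S))\ge 0$ is precisely inequality (\ref{eq:cfl1}). Condition (i) of Corollary~\ref{cor:RNin} is literally condition (i) of Theorem~\ref{cfl} after substitution. For condition (ii), the identity
\[
\int_\S f^{(n)}(s)\mathbf{I}\{s\in\S\,:\,f^{(n)}(s)\le -K\}\mu^{(n)}(ds)=\Tilde\mu^{(n)}(\{s\in\S\,:\,f^{(n)}(s)\le -K\})
\]
shows that (\ref{eq:ui}) coincides with condition (ii) of the corollary.

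Having matched both the hypotheses and the two conditions, Theorem~\ref{cfl} yields the stated equivalence. There is no essential obstacle beyond this bookkeeping: one simply checks the $L^1$-integrability of the Radon--Nikodym derivatives and rewrites the integrals of $f^{(n)}$ as values of the signed measure $\Tilde\mu^{(n)}$. Once the translation $f^{(n)}\leftrightarrow\Tilde\mu^{(n)}$, $f\leftrightarrow\Tilde\mu$ is in place, the corollary reduces verbatim to the main theorem.
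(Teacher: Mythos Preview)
Your proposal is correct and follows essentially the same approach as the paper: set $f:=\frac{d\Tilde\mu}{d\mu}$, $f^{(n)}:=\frac{d\Tilde\mu^{(n)}}{d\mu^{(n)}}$, check $L^1$-integrability via $\int_\S|f|\,d\mu=|\Tilde\mu|(\S)<\infty$ (and similarly for $f^{(n)}$), and then invoke Theorem~\ref{cfl}. Your writeup is in fact more explicit than the paper's in spelling out that the main inequality and conditions (i), (ii) translate verbatim under this substitution.
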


We remark that, if $\{\Tilde  \mu^{(n)}\}_{n=1,2,\ldots}\subset \M(\S)$, then statement (ii) of Corollary~\ref{cor:RNin}  always holds because 
$\Tilde  \mu^{(n)}(\cdot)\ge 0$ for all $n=1,2,\ldots\ .$
Corollary~\ref{cor:RNin} implies the following necessary and sufficient condition for the convergence in total variation of finite signed measures  $\{\Tilde  \mu^{(n)}\}_{n=1,2,\ldots}.$

\begin{corollary}\label{cor:RNeq}
Let $(\S,\Sigma)$ be a measurable space,  $\{\mu^{(n)}\}_{n=1,2,\ldots}\subset \M(\S)$, {$\mu$ be a measure on $\S$,}  and $\{{ \Tilde \mu}, { \Tilde \mu}^{(n)}\}_{n=1,2,\ldots}$ be a sequence of finite signed measures on $\S$. Assume that ${ \Tilde \mu}\ll\mu$ and ${ \Tilde \mu}^{(n)}\ll\mu^{(n)}$ for each $n=1,2,\ldots\,. $ If the sequence $\{
\mu^{(n)}\}_{n=1,2,\ldots}$ converges in  total
variation to $ \mu$, then the sequence $\{\Tilde
\mu^{(n)}\}_{n=1,2,\ldots}$ converges in  total
variation to $\Tilde \mu$, that is,
\[
\lim\limits_{n\to\infty}\sup\limits_{S\in\Sigma}\left|{ \Tilde \mu}^{(n)}(S)- { \Tilde \mu}(S)\right|= 0,
\]
if and only if the following  two statements hold:
\begin{enumerate}
\item[(i)] the sequence $\{\frac{d{ \Tilde \mu}^{(n)}}{d\mu^{(n)}}\}_{n=1,2,\ldots}$ converges in measure $ \mu $ to
$\frac{d{ \Tilde \mu}}{d\mu}$, and, therefore, there exists a subsequence $\{\frac{d{ \Tilde \mu}^{(n_k)}}{d\mu^{(n_k)}}\}_{k=1,2,\ldots}\subseteq\{\frac{d{ \Tilde \mu}^{(n)}}{d\mu^{(n)}}\}_{n=1,2,\ldots}$
that  converges $ \mu$-a.e. to $\frac{d{ \Tilde \mu}}{d\mu};$
\item[(ii)] the following inequality holds:
\[
\lim\limits_{K\to+\infty}\sup\limits_{n=1,2,\ldots
} { |\Tilde \mu^{(n)}|}(\{s\in\S\,:\, |\frac{d{ \Tilde \mu}^{(n)}}{d\mu^{(n)}}(s)|\ge K\})
= 0,
\]
{ where $|\Tilde \mu^{(n)}|(S)=\int_S |\frac{d{ \Tilde \mu}^{(n)}}{d\mu^{(n)}}(s)|\mu^{(n)}(ds)$, $S\in \Sigma.$}
\end{enumerate}
\end{corollary}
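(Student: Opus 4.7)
The plan is to derive Corollary~\ref{cor:RNeq} from two applications of Corollary~\ref{cor:RNin}: one for the pair $(\tilde\mu^{(n)},\tilde\mu)$ and one for the pair $(-\tilde\mu^{(n)},-\tilde\mu)$. The starting observation is that for any signed measures $\nu_n,\nu$ on $\S$,
\[
\sup_{S\in\Sigma}|\nu_n(S)-\nu(S)|=\max\bigl\{\sup_{S\in\Sigma}(\nu_n(S)-\nu(S)),\,\sup_{S\in\Sigma}(\nu(S)-\nu_n(S))\bigr\},
\]
and each supremum on the right equals the negative of the corresponding infimum $\inf_{S\in\Sigma}(\nu(S)-\nu_n(S))$ or $\inf_{S\in\Sigma}(\nu_n(S)-\nu(S))$. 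Because both infima are nonpositive (take $S=\emptyset$), total variation convergence of $\tilde\mu^{(n)}$ to $\tilde\mu$ is equivalent to the conjunction of the one-sided asymptotic inequalities $\ilim_{n\to\infty}\inf_{S\in\Sigma}(\tilde\mu^{(n)}(S)-\tilde\mu(S))\ge 0$ and $\ilim_{n\to\infty}\inf_{S\in\Sigma}(\tilde\mu(S)-\tilde\mu^{(n)}(S))\ge 0$.

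Writing $f^{(n)}:=d\tilde\mu^{(n)}/d\mu^{(n)}$ and $f:=d\tilde\mu/d\mu$, I would apply Corollary~\ref{cor:RNin} to $(\tilde\mu^{(n)},\tilde\mu)$ to convert the first one-sided inequality into: (A$^+$) for each $\varepsilon>0$, $\mu(\{s\in\S\colon f^{(n)}(s)\le f(s)-\varepsilon\})\to 0$, and (B$^+$) $\ilim_{K\to\infty}\inf_n\tilde\mu^{(n)}(\{s\in\S\colon f^{(n)}(s)\le -K\})\ge 0$. Since $-\tilde\mu^{(n)}\ll\mu^{(n)}$ and $-\tilde\mu\ll\mu$ with Radon-Nikodym derivatives $-f^{(n)}$ and $-f$, I would then apply Corollary~\ref{cor:RNin} to $(-\tilde\mu^{(n)},-\tilde\mu)$ to convert the second one-sided inequality into: (A$^-$) for each $\varepsilon>0$, $\mu(\{s\in\S\colon f^{(n)}(s)\ge f(s)+\varepsilon\})\to 0$, and (B$^-$) $\ilim_{K\to\infty}\inf_n(-\tilde\mu^{(n)})(\{s\in\S\colon f^{(n)}(s)\ge K\})\ge 0$.

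Conditions (A$^+$) and (A$^-$) together say precisely that $f^{(n)}\to f$ in measure $\mu$, and a standard subsequence extraction then yields the $\mu$-a.e.\ convergent subsequence in (i). For (ii), I would use the defining identity $|\tilde\mu^{(n)}|(A)=\int_A|f^{(n)}(s)|\mu^{(n)}(ds)$: on $\{f^{(n)}\le -K\}$ one has $|f^{(n)}|=-f^{(n)}$ so $|\tilde\mu^{(n)}|(\{f^{(n)}\le -K\})=-\tilde\mu^{(n)}(\{f^{(n)}\le -K\})$, and on $\{f^{(n)}\ge K\}$ one has $|\tilde\mu^{(n)}|(\{f^{(n)}\ge K\})=\tilde\mu^{(n)}(\{f^{(n)}\ge K\})$. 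Under these identifications, (B$^+$) is equivalent to $\lim_{K\to\infty}\sup_n|\tilde\mu^{(n)}|(\{f^{(n)}\le -K\})=0$, (B$^-$) is equivalent to $\lim_{K\to\infty}\sup_n|\tilde\mu^{(n)}|(\{f^{(n)}\ge K\})=0$, and adding the two yields the two-sided uniform tail bound $\lim_{K\to\infty}\sup_n|\tilde\mu^{(n)}|(\{|f^{(n)}|\ge K\})=0$ asserted in (ii); the reverse implication is immediate. The main obstacle I anticipate is purely bookkeeping, namely the sign-tracking needed to convert the signed one-sided tails in Corollary~\ref{cor:RNin}(ii) into the absolute-value form in Corollary~\ref{cor:RNeq}(ii); once that dictionary is in place, no further analytic input is needed.
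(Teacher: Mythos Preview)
Your proposal is correct and follows exactly the paper's approach: the paper's proof of Corollary~\ref{cor:RNeq} consists of a single sentence stating that Corollary~\ref{cor:RNin} applied to $\{\tilde\mu,\tilde\mu^{(n)},\mu,\mu^{(n)}\}$ and to $\{-\tilde\mu,-\tilde\mu^{(n)},\mu,\mu^{(n)}\}$ yields the result, which is precisely the two one-sided applications you describe. Your write-up in fact supplies the sign bookkeeping that the paper leaves implicit, so nothing is missing.
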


\section{Proofs}\label{sec2}

For a measurable function $g:\S\to\R$, real
number $K$, and set $S\in \Sigma$, we
denote:
\[
S_{g\ge K}:=\{s\in S\,:\, g(s)\ge K\}, \quad S_{g>K}:=\{s\in S\,:\,
g(s)> K\},
\]
\[
S_{g\le K}:=\{s\in S\,:\, g(s)\le K\}, \quad S_{g<K}:=\{s\in S\,:\,
g(s)< K\}.
\]

The proof of Theorem~\ref{cfl} consists of four auxiliary lemmas.

\begin{lemma}\label{lem:1} Let $(\S,\Sigma)$ be a measurable space, $\{f^{(n)},f\}_{n=1,2,\ldots}
$ be a sequence of measurable functions, $ \mu  $ be a measure on $\S$, and
 (\ref{eq:cfl2}) hold for each $\varepsilon>0$. Then there exists a subsequence $\{f^{(n_k)}\}_{k=1,2,\ldots}\subseteq\{f^{(n)}\}_{n=1,2,\ldots}$ 
such that
(\ref{eq:cfl3}) holds.
\end{lemma}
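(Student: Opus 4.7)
The lemma is a one-sided analogue of the classical statement that convergence in measure admits an almost-everywhere convergent subsequence, and the plan is to mimic the standard Borel--Cantelli extraction applied to the one-sided exceptional sets appearing in (\ref{eq:cfl2}).

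First, I would build the subsequence inductively. Given $n_1<n_2<\cdots<n_{k-1}$, I apply hypothesis (\ref{eq:cfl2}) with $\varepsilon=1/k$: since $\mu(\{s\in\S:\,f^{(n)}(s)\le f(s)-1/k\})\to 0$ as $n\to\infty$, I may choose $n_k>n_{k-1}$ for which the set
\[
A_k:=\{s\in\S:\,f^{(n_k)}(s)\le f(s)-1/k\}
\]
satisfies $\mu(A_k)\le 2^{-k}$. The geometric decay makes $\sum_{k=1}^{\infty}\mu(A_k)\le 1<\infty$, which is exactly what is needed for the next step.

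Next, I apply the first Borel--Cantelli lemma (valid for an arbitrary measure, since it uses only $\sigma$-subadditivity together with the summability just established) to conclude that the set $N:=\limsup_{k\to\infty}A_k$ has $\mu(N)=0$. For every $s\in\S\setminus N$ there exists $K=K(s)$ such that $s\notin A_k$ for all $k\ge K$, i.e.\ $f^{(n_k)}(s)>f(s)-1/k$ for every such $k$. Taking $\ilim$ in $k$ pointwise on the $\mu$-conull set $\S\setminus N$ yields
\[
\ilim_{k\to\infty}f^{(n_k)}(s)\ge \ilim_{k\to\infty}\bigl(f(s)-1/k\bigr)=f(s),
\]
which is exactly (\ref{eq:cfl3}).

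I do not expect any genuine obstacle; the argument is routine once the two ingredients (extraction producing a summable sequence of measures, and Borel--Cantelli) are lined up. The only mild point of care concerns points with $f(s)=+\infty$: under the natural convention $+\infty-\varepsilon=+\infty$, hypothesis (\ref{eq:cfl2}) itself forces $\mu(\{f=+\infty\})=0$, so such points can be absorbed into the $\mu$-null exceptional set without affecting the liminf inequality on the complement.
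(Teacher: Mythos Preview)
Your proposal is correct and follows essentially the same Borel--Cantelli extraction argument as the paper: choose indices so that the exceptional sets have summable measure, then pass to the $\limsup$ set. The only cosmetic difference is that you let the threshold $\varepsilon=1/k$ vary with $k$ in the extraction, whereas the paper extracts for a fixed $\varepsilon$ and then claims the conclusion for every $\varepsilon>0$; your version is in fact the cleaner way to write it, since it makes transparent why a single subsequence suffices.
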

\begin{proof}
Fix an arbitrary $\varepsilon>0$.
According to (\ref{eq:cfl2}), there exists a sequence
$\{n_k\}_{k=1,2,\ldots}$
such that\\ $ \mu(\S_{f-f^{(n_k)}\ge\varepsilon}) \le 2^{-k}$,
$k=1,2,\ldots\,.$ Thus,
\[
 \mu(\cup_{k=K}^{\infty}\S_{f-f^{(n_k)}\ge\varepsilon})\le
 \sum_{k=K}^{\infty} \mu(\S_{f-f^{(n_k)}\ge\varepsilon})\le \sum_{k=K}^{\infty} 2^{-k}\le 2^{-K+1},\]
$ K=1,2,\ldots\,.$ Therefore, $
\mu(\cap_{K=1}^{\infty}\cup_{k=K}^{\infty}\S_{f-f^{(n_k)}\ge\varepsilon})=0$,
that is, {for each $\varepsilon>0$}
\[
 \mu(\{s\in\S\,:\,\ilim\limits_{k\to\infty}f^{(n_k)}(s)\le f(s)-\varepsilon\})=0.
\]
 Thus, if (\ref{eq:cfl2}) holds for
each $\varepsilon>0$, then (\ref{eq:cfl3}) holds. \end{proof}

\begin{lemma}\label{lem:2}
Let the assumptions of Theorem~\ref{cfl} hold. Then inequality (\ref{eq:cfl1}) implies statement (i) from
Theorem~\ref{cfl}.
\end{lemma}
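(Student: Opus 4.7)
The plan is to prove the contrapositive (by contradiction): assume (\ref{eq:cfl1}) and suppose that (\ref{eq:cfl2}) fails for some $\varepsilon>0$, so there exist $\delta>0$ and a subsequence $\{n_k\}$ with $\mu(\S_{f-f^{(n_k)}\ge\varepsilon})\ge\delta$ for all $k$. The strategy is to exhibit a choice of $S\in\Sigma$ (depending on $n_k$) for which the difference of integrals in (\ref{eq:cfl1}) is bounded above by a strictly negative constant, contradicting the liminf inequality. The natural candidate is a truncated version of $\S_{f-f^{(n_k)}\ge\varepsilon}$ on which $f^{(n_k)}$ is bounded, so that total variation convergence of $\mu^{(n_k)}\to\mu$ can transfer integrals against $\mu^{(n_k)}$ to integrals against $\mu$.

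The main technical step is to control $\mu$-measure of the set where $f^{(n)}$ is very negative, uniformly in $n$. I would derive this from (\ref{eq:cfl1}) itself: applying the assumption to the set $S=\S_{f^{(n)}<-K}$, I get
\[
-K\mu^{(n)}(\S_{f^{(n)}<-K}) + \int_{\S_{f^{(n)}<-K}}|f|\,d\mu \ \ge \ \int_{\S_{f^{(n)}<-K}}f^{(n)}\,d\mu^{(n)}-\int_{\S_{f^{(n)}<-K}}f\,d\mu \ \ge\ -1
\]
for all sufficiently large $n$, where I used $\int_S f^{(n)}d\mu^{(n)}\le -K\mu^{(n)}(S)$ on $S=\S_{f^{(n)}<-K}$. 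Setting $C:=\int_\S|f|\,d\mu<\infty$, this yields $\mu^{(n)}(\S_{f^{(n)}<-K})\le (C+1)/K$ for $n$ large, and then the total variation bound $|\mu^{(n)}(E)-\mu(E)|\le {\rm dist}(\mu^{(n)},\mu)$ gives $\limsup_n\mu(\S_{f^{(n)}<-K})\le (C+1)/K$, which tends to $0$ as $K\to\infty$.

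With this in hand, I fix $K$ so large that $\mu(\S_{f>K})\le\delta/4$ and $\limsup_n\mu(\S_{f^{(n)}<-K})\le\delta/4$, and define
\[
S_{n_k} := \S_{f-f^{(n_k)}\ge\varepsilon}\cap \S_{f^{(n_k)}\ge -K}\cap \S_{f\le K}.
\]
Then $\mu(S_{n_k})\ge \delta-\delta/4-\delta/4=\delta/2$ for large $k$, and on $S_{n_k}$ one has $-K\le f^{(n_k)}\le f-\varepsilon\le K-\varepsilon$, so $|f^{(n_k)}|\le K$.

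The final step is the estimate, using the decomposition
\[
\int_{S_{n_k}}f^{(n_k)}d\mu^{(n_k)}-\int_{S_{n_k}}f\,d\mu = \int_{S_{n_k}}f^{(n_k)}(d\mu^{(n_k)}-d\mu)+\int_{S_{n_k}}(f^{(n_k)}-f)\,d\mu.
\]
The first term is bounded in absolute value by $K\cdot{\rm dist}(\mu^{(n_k)},\mu)\to 0$ by the total variation boundedness of $f^{(n_k)}\h_{S_{n_k}}$, and the second term is at most $-\varepsilon\mu(S_{n_k})\le -\varepsilon\delta/2$ since $f^{(n_k)}-f\le-\varepsilon$ on $S_{n_k}$. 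Hence $\liminf_k\inf_{S\in\Sigma}(\cdot)\le-\varepsilon\delta/2<0$, contradicting (\ref{eq:cfl1}). The main obstacle is precisely the unboundedness of $f^{(n)}$ from below on $\S_{f-f^{(n)}\ge\varepsilon}$, which prevents a naive direct application of total variation convergence; this is resolved by extracting uniform tail control for $f^{(n)}$ from (\ref{eq:cfl1}) itself before truncating.
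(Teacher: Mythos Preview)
Your argument is correct. Both proofs argue by contradiction from the same setup $\mu(\S_{f-f^{(n_k)}\ge\varepsilon})\ge\delta$, but the truncations differ. The paper truncates only by $|f|$: it removes $\S_{|f|\ge M}$ (with $M=4C/\delta^*$ via Chebyshev), then on the remaining set uses $f^{(n_k)}\le f-\varepsilon$ to replace the $f^{(n_k)}$-integral by an $f$-integral minus $\varepsilon\,\mu^{(n_k)}(\cdot)$; since $f$ is now bounded, total variation convergence forces $\int f\,d\mu^{(n_k)}-\int f\,d\mu\to 0$, yielding the contradiction. It never needs any control on the negative tail of $f^{(n)}$. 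Your route instead first extracts the uniform bound $\mu^{(n)}(\S_{f^{(n)}<-K})\le (C+1)/K$ directly from (\ref{eq:cfl1}) and then truncates by both $\{f\le K\}$ and $\{f^{(n_k)}\ge -K\}$, so that $f^{(n_k)}$ itself is bounded on $S_{n_k}$ and total variation convergence applies to $f^{(n_k)}$ directly. Your preliminary tail estimate is a nice self-contained observation (in spirit a one-line version of what the paper later does in Lemma~\ref{lem:3}), at the cost of one extra step; the paper's version is leaner because the inequality $f^{(n_k)}\le f-\varepsilon$ already converts everything to the integrable function $f$, so no control of $f^{(n)}$ is needed.
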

\begin{proof} On the contrary, if statement (i) from
Theorem~\ref{cfl} does not hold, then there exist a sequence
$\{n_k\to\infty\}_{k=1,2,\ldots}$
and positive constants $\varepsilon^*$ and $\delta^*$ such that
\begin{equation}\label{eq:ncfl1}
\mu(\S_{f-f^{(n_k)}\ge\varepsilon^*})\ge \delta ^*,\quad k=1,2,\ldots\ .
\end{equation}
Since the sequence of finite measures
$\{ \mu^{(n)}\}_{n=1,2,\ldots}$ converges in  total
variation to the finite measure $ \mu$, there exists $K_1= 1,2,\ldots,$ such
that
\begin{equation}\label{eq:ncfl3}
\sup\limits_{S\in \Sigma}\left|\mu^{(n_k)}(S)- \mu(S) \right| \le
\frac{\delta^*}{4},\quad k=K_1,K_1+1,\ldots\,.\end{equation}
Therefore, inequalities (\ref{eq:ncfl1}) and (\ref{eq:ncfl3}) yield
that
\begin{equation}\label{eq:ncfl4}
\mu^{(n_k)}(\S_{f-f^{(n_k)}\ge\varepsilon^*})\ge \frac{3\delta ^*}{4},\quad k=K_1,K_1+1,\ldots\,.
\end{equation}
Let us set $C:=\int_\S|f(s)|\mu(ds)$.
Note that $C<\infty$, because $f\in L^1(\S;\mu)$. Chebyshev's
inequality yields
that $\mu(\S_{|f|\ge M})\le \frac{C}{M}$ for each $M>0$. 
 Thus, inequality (\ref{eq:ncfl3}) implies
\begin{equation}\label{eq:ncfl5}
 \mu^{(n_k)}(\S_{|f|\ge \frac{4C}{\delta^*}})\le \frac{\delta ^*}{2},\quad k=K_1,K_1+1,\ldots\,.
\end{equation}
 Moreover, inequalities
(\ref{eq:ncfl4}) and (\ref{eq:ncfl5}) yield
\begin{equation}\label{eq:ncfl6}
\mu^{(n_k)}(\S_{f-f^{(n_k)}\ge\varepsilon^*}\setminus \S_{|f|\ge
\frac{4C}{\delta^*}}) \ge \frac{\delta ^*}{4},\quad k=K_1,K_1+1,\ldots\,.
\end{equation}
 Indeed, for $k=K_1,K_1+1,\ldots,$
\[
\begin{aligned}
\frac{3\delta ^*}{4}&\le
\mu^{(n_k)}(\S_{f-f^{(n_k)}\ge\varepsilon^*})\le \mu^{(n_k)}(\S_{|f|\ge
\frac{4C}{\delta^*}})+
\mu^{(n_k)}(\S_{f-f^{(n_k)}\ge\varepsilon^*}\setminus \S_{|f|\ge
\frac{4C}{\delta^*}})\\ &\le \frac{\delta
^*}{2}+\mu^{(n_k)}(\S_{f-f^{(n_k)}\ge\varepsilon^*}\setminus
\S_{|f|\ge \frac{4C}{\delta^*}}), \end{aligned}\]
where the first inequality follows from
(\ref{eq:ncfl4}), the second inequality follows from subadditivity
of the finite measure $\mu^{(n_k)}$, and the third inequality
follows from (\ref{eq:ncfl5}). Inequality (\ref{eq:cfl1}) implies
the existence of $K_2= K_1,K_1+1,\ldots$ such that
\begin{equation}\label{eq:ncfl7}
\begin{aligned}\int_{\S_{f-f^{(n_k)}\ge \varepsilon^*}\setminus \S_{|f|\ge
\frac{4C}{\delta^*}}} & f^{(n_k)}(s) \mu^{(n_k)}(ds) \\ -&
\int_{\S_{f-f^{(n_k)}\ge\varepsilon^*}\setminus \S_{|f|\ge
\frac{4C}{\delta^*}}}f(s) \mu(ds)\ge -\frac{\varepsilon^*\delta
^*}{8} \end{aligned}
\end{equation}
for each $k=K_2,K_2+1,\ldots\,.$ The definition of
$\S_{f-f^{(n_k)}\ge\varepsilon^*}$ and inequalities (\ref{eq:ncfl6})
and (\ref{eq:ncfl7}) yield that for each $k=K_2,K_2+1,\ldots\,.$
\[
\begin{aligned}
-\frac{\varepsilon^*\delta ^*}{8}\le
&\int_{\S_{f-f^{(n_k)}\ge\varepsilon^*}\setminus \S_{|f|\ge
\frac{4C}{\delta^*}}}f(s) \mu^{(n_k)}(ds)\\ -&
\int_{\S_{f-f^{(n_k)}\ge\varepsilon^*}\setminus \S_{|f|\ge
\frac{4C}{\delta^*}}}f(s) \mu(ds) -
\varepsilon^*\mu^{(n_k)}(\S_{f-f^{(n_k)}\ge\varepsilon^*}\setminus \S_{|f|\ge \frac{4C}{\delta^*}})\\
\le & \int_{\S_{f-f^{(n_k)}\ge\varepsilon^*}\setminus \S_{|f|\ge
\frac{4C}{\delta^*}}}f(s) \mu^{(n_k)}(ds)-
\int_{\S_{f-f^{(n_k)}\ge\varepsilon^*}\setminus \S_{|f|\ge
\frac{4C}{\delta^*}}}f(s) \mu(ds) - \frac{\varepsilon^*\delta ^*}{4}.
\end{aligned}
\]
 Therefore, for each $k=K_2,K_2+1,\ldots,$
\begin{equation}\label{eq:ncfl8}
\begin{aligned}
\int_{\S}f(s)&{\bf I}\left\{s\in
\S_{f-f^{(n_k)}\ge\varepsilon^*}\setminus \S_{|f|\ge
\frac{4C}{\delta^*}}\right\} \mu^{(n_k)}(ds)\\&- \int_{\S}f(s) {\bf
I}\left\{s\in \S_{f-f^{(n_k)}\ge\varepsilon^*}\setminus \S_{|f|\ge
\frac{4C}{\delta^*}}\right\}\mu(ds)\ge \frac{\varepsilon^*\delta
^*}{8}.
\end{aligned}
\end{equation}
 Since each function $s\to f(s){\bf
I}\left\{s\in \S_{f-f^{(n_k)}\ge\varepsilon^*}\setminus \S_{|f|\ge
\frac{4C}{\delta^*}}\right\}$, $k=K_2,K_2+1,\ldots,$ is measurable
and absolutely bounded by the constant $\frac{4C }{\delta^*}$ and
the sequence of finite measures $\{ \mu^{(n)}\}_{n=1,2,\ldots}$ converges in total variation to $ \mu\in\M(\S)$,
\[
\begin{aligned}
\int_{\S}f(s)&{\bf I}\left\{s\in
\S_{f-f^{(n_k)}\ge\varepsilon^*}\setminus \S_{|f|\ge
\frac{4C}{\delta^*}}\right\} \mu^{(n_k)}(ds)\\&- \int_{\S}f(s) {\bf
I}\left\{s\in \S_{f-f^{(n_k)}\ge\varepsilon^*}\setminus \S_{|f|\ge
\frac{4C}{\delta^*}}\right\}\mu(ds)\to 0, \ k\to \infty.
\end{aligned}
\]
This contradics (\ref{eq:ncfl8}). Therefore,
inequality (\ref{eq:cfl1}) implies statement (i) of
Theorem~\ref{cfl}.
 %
%
%
\end{proof}

\begin{lemma}\label{lem:3}
Let $(\S,\Sigma)$ be a measurable space,  $\{
\mu^{(n)},\mu\}_{n=1,2,\ldots}\subset \M(\S)$, $f\in L^1(\S;\mu)$, and   $f^{(n)}
\in L^1(\S;\mu^{(n)})$, for each $n=1,2,\ldots\,.$
Then, inequality (\ref{eq:cfl1}) and statement (i) from
Theorem~\ref{cfl} imply statement (ii) from
Theorem~\ref{cfl}.
\end{lemma}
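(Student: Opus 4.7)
The plan is to exploit the set-independence of the bound in (\ref{eq:cfl1}) by applying it with the $n$-dependent test set $S_n^K := \{s\in\S : f^{(n)}(s)\le -K\}$. Fix $\eta>0$. Inequality (\ref{eq:cfl1}) supplies $N_0$ such that
\[
\int_S f^{(n)}(s)\mu^{(n)}(ds) - \int_S f(s)\mu(ds) \ge -\eta
\]
for every $n\ge N_0$ and every $S\in\Sigma$. Choosing $S=S_n^K$ and bounding $\int_{S_n^K} f\,d\mu \ge -\int_{S_n^K} f^{-} d\mu$ reduces the target lower bound for $\int_{S_n^K} f^{(n)} d\mu^{(n)}$ to showing that $\int_{S_n^K} f^{-} d\mu \to 0$ as $K\to\infty$, uniformly in $n\ge N_0$.

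The crux is the elementary set inclusion
\[
S_n^K \subseteq \{s\in\S : f^{(n)}(s)\le f(s)-\varepsilon\}\cup\{s\in\S : f(s)\le -K+\varepsilon\}, \qquad \varepsilon>0,
\]
which holds because $f^{(n)}(s)\le -K$ together with $f(s)>-K+\varepsilon$ forces $f^{(n)}(s)\le f(s)-\varepsilon$. Statement~(i) drives the $\mu$-measure of the first set to $0$ as $n\to\infty$, while $f\in L^1(\S;\mu)$ combined with the finiteness of $\mu$ drives the $\mu$-measure of the second set to $0$ as $K\to\infty$ (by Chebyshev's inequality). Absolute continuity of the set function $E\mapsto \int_E f^{-} d\mu$ then converts this measure smallness into the required uniform smallness of $\int_{S_n^K} f^{-} d\mu$.

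Finally, the finitely many indices $n=1,\dots,N_0-1$ need separate treatment: for each such $n$, the integrability $f^{(n)}\in L^1(\S;\mu^{(n)})$ gives $\int_\S (f^{(n)})^{-}\mathbf{I}\{f^{(n)}\le -K\}\mu^{(n)}(ds)\to 0$ as $K\to\infty$, and since only finitely many indices are involved, a single $K$ can be enlarged to handle all of them simultaneously. I expect the main technical obstacle to be securing uniformity in $n$ for the bound on $\int_{S_n^K} f^{-} d\mu$, since $S_n^K$ varies with $n$; the set inclusion above is precisely the device that lets statement~(i) and the integrability of $f$ deliver this uniformity.
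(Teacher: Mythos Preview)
Your approach is essentially the same as the paper's: both apply inequality~(\ref{eq:cfl1}) with the test set $\{f^{(n)}\le -K\}$, then split this set via the inclusion into $\{f\le -K+\varepsilon\}$ (controlled by $f\in L^1(\S;\mu)$) and $\{f^{(n)}\le f-\varepsilon\}$ (controlled by statement~(i)), with the finitely many initial indices handled directly by integrability of each $f^{(n)}$. One small point of phrasing: the asserted uniformity ``$\int_{S_n^K} f^{-}\,d\mu\to 0$ as $K\to\infty$, uniformly in $n\ge N_0$'' is not quite what your argument yields, since the bound on the piece $\int_{\{f^{(n)}\le f-\varepsilon\}} f^{-}\,d\mu$ requires passing to a further threshold $N_1\ge N_0$ coming from statement~(i); the paper handles this by introducing a second index $N_2\ge N_1$, and you should likewise absorb the indices $N_0\le n<N_1$ into the ``separate treatment'' step.
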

\begin{remark}\label{rem:lem:3}
{\rm According to Lemmas~\ref{lem:2} and \ref{lem:3}, if the assumptions of Theorem~\ref{cfl} hold, then inequality (\ref{eq:cfl1}) implies statements (i) and (ii) from
Theorem~\ref{cfl}.}
\end{remark}

\begin{proof}[Proof of Lemma~\ref{lem:3}.] For each $Q\in \M(\S)$ and $g\in L^1(\S;Q),$
\begin{equation}\label{eq:ui01}
\int_{\S_{g\le -K}}g(s) Q(ds)\to 0\mbox{ as }K\to +\infty.
\end{equation} Therefore, statement (ii) of Theorem~\ref{cfl} is
equivalent to the existence of a natural number $N$ such that for each $\varepsilon>0$
\begin{equation}\label{eq:ui000}
\ilim\limits_{K\to+\infty}\inf\limits_{n=N,N+1,\ldots
}\int_{\S_{f^{(n)}\le -K}}f^{(n)}(s) \mu^{(n)}(ds)\ge -\varepsilon.
\end{equation}

Let us fix an arbitrary $\varepsilon>0$ and verify (\ref{eq:ui000}).
{According to inequality (\ref{eq:cfl1}), there exists
$N_1=1,2,\ldots$ such that for  $n=N_1,N_1+1,\ldots$
\[
\inf\limits_{S\in\Sigma}\left(\int_{S}f^{(n)}(s)
\mu^{(n)}(ds)- \int_{S}f(s) \mu(ds)\right)\ge -\frac\varepsilon2.
\]
Then, for  $n=N_1,N_1+1,\ldots$ and $K>0,$}
\begin{equation}\label{eq:ui1}
\int_{\S_{f^{(n)}\le -K}} f^{(n)}(s) \mu^{(n)}(ds)\ge
\int_{\S_{f^{(n)}\le -K}}f(s) \mu(ds) -\frac\varepsilon2.
\end{equation}
 Direct calculations imply that, for $ n=N_1,N_1+1,\ldots$ and for  $K>0,$
\begin{equation}\label{eq:ui2}
\begin{aligned}
&\int_{\S_{f^{(n)}\le -K}}f(s) \mu(ds) = \int_{ \S_{f-f^{(n)}<1}\cap
\S_{f^{(n)}\le -K}}f(s) \mu(ds)\\ & + \int_{ \S_{f-f^{(n)}\ge 1}\cap
\S_{f^{(n)}\le -K}}f(s)   \mu(ds)\ge -\int_{\S_{f\le 1-K}}|f(s)|
\mu(ds)\\ &-\int_{ \S_{f^{(n)}-f\le -1}}|f(s)| \mu(ds),
\end{aligned}
\end{equation}
{where the inequality holds because $\S_{f-f^{(n)}<1}\cap
\S_{f^{(n)}\le -K}\subseteq \S_{f\le 1-K}$ and $\S_{f-f^{(n)}\ge 1}\cap
\S_{f^{(n)}\le -K}\subseteq \S_{f^{(n)}-f\le -1}$.}
Due to (\ref{eq:ui01})
\begin{equation}\label{eq:ui3}
\int_{\S_{f\le -K+1}}|f(s)| \mu(ds) \to 0 \mbox{ as } K\to+\infty.
\end{equation}
Statement (i) of
Theorem~\ref{cfl} yields that
$
\mu(\S_{ f^{(n)}- f\le -1})\to 0 \mbox{ as } n\to \infty.
$
Therefore, {since $f\in L^1(\S;\mu),$} there exists $N_2=N_1,N_1+1,\ldots$ such that
\begin{equation}\label{eq:ui4}
\int_{\S_{ f^{(n)}- f\le -1}}|f(s)|\mu(ds)\le \frac\varepsilon2, \quad n=N_2,N_2+1,\ldots\,.
\end{equation}
 Thus (\ref{eq:ui1}) -- (\ref{eq:ui4})
imply the existence of a natural number $N$ such that for each $\varepsilon>0$
(\ref{eq:ui000}) holds.
%
%
%
Therefore,
inequality (\ref{eq:cfl1}) and  statement (i) from
Theorem~\ref{cfl} imply statement (ii) {from}
Theorem~\ref{cfl}.\end{proof}

\begin{lemma}\label{lem:4}
Let the assumptions of Theorem~\ref{cfl} hold. Then statements (i) and (ii) {from} Theorem~\ref{cfl}
yield inequality (\ref{eq:cfl1}).
\end{lemma}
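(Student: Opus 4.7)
The plan is to show that for each $\varepsilon_0>0$ one can find $N$ with $\int_S f^{(n)}(s)\,\mu^{(n)}(ds)-\int_S f(s)\,\mu(ds)\ge -\varepsilon_0$ uniformly in $S\in\Sigma$ for $n\ge N$. The principal obstacle is that $f$ need only belong to $L^1(\S;\mu)$, so the total-variation distance can be invoked only against bounded integrands; the argument will therefore be driven by a two-sided truncation combined with a ``good-set'' decomposition.

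I would first fix an auxiliary $\varepsilon>0$ (ultimately of the form $\varepsilon_0/(C+3)$, where $C:=\sup_n\mu^{(n)}(\S)<\infty$). Using (ii), pick $K>0$ so large that $\int_{\S_{f^{(n)}\le -K}}f^{(n)}(s)\,\mu^{(n)}(ds)\ge -\varepsilon$ for all $n$ in a tail; enlarge $K$, using $f\in L^1(\S;\mu)$, so that $\int_{\S_{|f|>K}}|f(s)|\,\mu(ds)\le \varepsilon$. Set $\hat f^{(n)}:=\max(f^{(n)},-K)$ and $\bar f:=\max(\min(f,K),-K)\in[-K,K]$. A direct computation then gives, uniformly in $S$,
\[
\int_S f^{(n)}(s)\,\mu^{(n)}(ds)\ge \int_S\hat f^{(n)}(s)\,\mu^{(n)}(ds)-\varepsilon,\qquad \int_S f(s)\,\mu(ds)\le\int_S\bar f(s)\,\mu(ds)+\varepsilon.
\]
This reduces the problem to lower-bounding $\int_S\hat f^{(n)}(s)\,\mu^{(n)}(ds)-\int_S\bar f(s)\,\mu(ds)$.

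The next step is to introduce $A_n:=\S_{\hat f-\hat f^{(n)}\le\varepsilon}$. On $A_n$ one has $\hat f^{(n)}\ge\hat f-\varepsilon\ge\bar f-\varepsilon$ (since $\bar f\le\hat f$); on $A_n^c$ only $\hat f^{(n)}\ge -K$ is available. Combined with $|\bar f|\le K$, these chain to
\[
\int_S\hat f^{(n)}(s)\,\mu^{(n)}(ds)\ge \int_S\bar f(s)\,\mu^{(n)}(ds)-\varepsilon\mu^{(n)}(\S)-2K\mu^{(n)}(A_n^c).
\]
A short case analysis (using $\hat f=\max(f,-K)$) shows $A_n^c\subseteq \S_{f-f^{(n)}>\varepsilon}$, so (i) together with total-variation convergence yields $\mu^{(n)}(A_n^c)\to 0$ for the fixed $K$; since $|\bar f|\le K$, total-variation convergence also yields $\sup_{S\in\Sigma}\left|\int_S\bar f(s)\,\mu^{(n)}(ds)-\int_S\bar f(s)\,\mu(ds)\right|\le K\cdot\mathrm{dist}(\mu^{(n)},\mu)\to 0$.

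Finally, assembling the three estimates gives, for $n$ large,
\[
\int_S f^{(n)}(s)\,\mu^{(n)}(ds)-\int_S f(s)\,\mu(ds)\ge -2\varepsilon-\varepsilon\mu^{(n)}(\S)-K\cdot\mathrm{dist}(\mu^{(n)},\mu)-2K\mu^{(n)}(A_n^c),
\]
where the last two terms vanish as $n\to\infty$, while the first two are at most $\varepsilon(C+2)\le\varepsilon_0$ by the initial choice of $\varepsilon$. This establishes (\ref{eq:cfl1}). The main obstacle is the bookkeeping: every inequality in the chain must involve either a bounded integrand (so total-variation estimates apply) or a genuinely nonnegative error term controllable by one of (i), (ii), or the $\mu$-integrability of $f$; the two-sided truncation and the $A_n$-decomposition are engineered precisely to enforce this.
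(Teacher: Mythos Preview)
Your argument is correct. The only notational hiccup is that $\hat f$ first appears in the definition of $A_n$ but is only identified as $\max(f,-K)$ a few lines later; once that is read in, every step checks out (in particular, on $A_n^c$ one has $\hat f>-K+\varepsilon$, forcing $\hat f=f$, which together with $\hat f^{(n)}\ge f^{(n)}$ gives the claimed inclusion $A_n^c\subseteq\S_{f-f^{(n)}>\varepsilon}$).

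Your route and the paper's use the same four ingredients---condition~(ii) for the lower tail of $f^{(n)}$, the $\mu$-integrability of $f$ for its tails, total-variation convergence against bounded integrands, and condition~(i) to kill a ``bad set''---but they are organised differently. The paper never truncates the functions; instead it partitions each $S$ into the three zones $S_{f^{(n)}\le -K}$, $S_{|f^{(n)}|<K}$, $S_{f^{(n)}\ge K}$ and handles the resulting pieces $I_1,I_2,I_3$ separately, with a further good-set split only inside $I_2$. You truncate $f$ on both sides and $f^{(n)}$ below, reducing everything to a single comparison of bounded functions followed by one good-set/bad-set split $A_n$ versus $A_n^c$. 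The payoff of your packaging is brevity: the upper tail $\{f^{(n)}\ge K\}$ never needs separate treatment because $\hat f^{(n)}\ge\bar f-\varepsilon$ already holds there automatically, whereas the paper expends a dedicated estimate ($I_3$) on that region. The paper's layout, on the other hand, makes the role of each hypothesis slightly more visible term-by-term.
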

\begin{proof}{ Additivity of integrals and the property, that an infimum of a sum of two functions is greater than or equal to the sum of infimums, imply that,  for  $n=1,2,\ldots$ and $K>0,$}
\begin{equation}\label{eq:ui5}
\begin{aligned}
&\inf\limits_{S\in\Sigma}\left(\int_{S}f^{(n)}(s) \mu^{(n)}(ds)-
\int_{S}f(s) \mu(ds)\right)\\ &\ge
\inf\limits_{S\in\Sigma}\left(\int_{S_{f^{(n)}\le -K}}f^{(n)}(s)
\mu^{(n)}(ds)-\int_{S_{f^{(n)}\le -K}}f(s) \mu(ds)\right)\\
&+ \inf\limits_{S\in\Sigma}\left(\int_{S_{f^{(n)}> -K}}f^{(n)}(s)
\mu^{(n)}(ds)- \int_{S_{f^{(n)}> -K}}f(s)\mu(ds)\right),
\end{aligned}
\end{equation}
Note that,{ for  $n=1,2,\ldots$ and $K>0,$}
\begin{equation}\label{eq:ui6}
\begin{aligned}
&\inf\limits_{S\in\Sigma}\left(\int_{S_{f^{(n)}\le -K}}f^{(n)}(s)
\mu^{(n)}(ds)-\int_{S_{f^{(n)}\le -K}}f(s) \mu(ds)\right)\\
&\ge \inf\limits_{n=1,2,\ldots }\int_{\S_{f^{(n)}\le -K}}f^{(n)}(s)
\mu^{(n)}(ds)-\int_{\S_{f^{(n)}\le -K}}|f(s)| \mu(ds).
\end{aligned}
\end{equation}
 Moreover, {for  $n=1,2,\ldots$ and $K>0,$ }
\begin{equation}\label{eq:ui7}
\begin{aligned}
& \int_{\S_{f^{(n)}\le -K}}|f(s)| \mu(ds)\le \int_{\S_{f\le
-K+1}}|f(s)| \mu(ds)+ \int_{\S_{f-f^{(n)}\ge 1}}|f(s)| \mu(ds),
\end{aligned}
\end{equation}
because, if $f^{(n)}(s)\le -K$
and $f(s)> -K+1$, then $f^{(n)}(s)< f(s)-1$ and, thus,
$f^{(n)}(s)\le f(s)-1$.

Since $f\in L^1(\S;\mu)$, then $\mu(\S_{f\le -K+1})\to 0$ as
$K\to+\infty$. Therefore,
\begin{equation}\label{eq:ui8}
\int_{\S_{f\le -K+1}}|f(s)| \mu(ds)\to 0 \mbox{ as }K\to+\infty.
\end{equation}
Due to {(\ref{eq:cfl2}), $\mu(\S_{f-f^{(n)}\ge 1})\to 0$ as $n\to\infty.$   Similar to (\ref{eq:ui8}),}
\begin{equation}\label{eq:ui9}
\int_{\S_{f-f^{(n)}\ge 1}}|f(s)| \mu(ds)\to 0 \mbox{ as }n\to\infty.
\end{equation}
According to (\ref{eq:ui5}) -- (\ref{eq:ui9}), inequality
(\ref{eq:cfl1}) follows from  statements (i) and (ii) of
Theorem~\ref{cfl}, if
\begin{equation}\label{eq:ui10}
\begin{aligned}
\ilim\limits_{K\to+\infty}\ilim\limits_{n\to\infty} I(n,K)\ge 0,
\end{aligned}\end{equation}
where, {for  $n=1,2,\ldots$  and $K>0,$} \[I(n,K):=\inf\limits_{S\in\Sigma}\left(\int_{S_{f^{(n)}>
-K}}f^{(n)}(s) \mu^{(n)}(ds)-\int_{S_{f^{(n)}> -K}}f(s) \mu(ds)\right).
\]
The rest of the proof establishes inequality (\ref{eq:ui10}).
We {observe} that for each $K>0$
\begin{equation}\label{eq:ui11}
\begin{aligned}
I(n,K)\ge I_1(n,K)+I_2(n,K)+I_3(n,K),\quad n=1,2,\ldots,
\end{aligned}\end{equation}
where
\[\begin{aligned}I_1(n,K)=&\inf\limits_{S\in\Sigma}\left(\int_{S_{|f^{(n)}|< K}}f^{(n)}(s)
\mu^{(n)}(ds)-\int_{S_{|f^{(n)}|< K}}f^{(n)}(s) \mu(ds)\right),\\
I_2(n,K)=&\inf\limits_{S\in\Sigma}\left(\int_{S_{|f^{(n)}|<
K}}f^{(n)}(s)
\mu(ds)-\int_{S_{|f^{(n)}|< K}}f(s) \mu(ds)\right),\\
I_3(n,K)=&\inf\limits_{S\in\Sigma}\left(\int_{S_{f^{(n)}\ge
K}}f^{(n)}(s) \mu^{(n)}(ds)-\int_{S_{f^{(n)}\ge K}}f(s) \mu(ds)\right).
\end{aligned}\]

Since $\{\mu^{(n)}\}_{n=1,2,\ldots}$ converges in total variation
to $\mu$, then $I_1(n,K)\to 0$ as $n\to \infty$ for each $K>0$.
Therefore,
\begin{equation}\label{eq:ui11a}
\begin{aligned}
\ilim\limits_{K\to+\infty}\ilim\limits_{n\to\infty} I_1(n,K)= 0.
\end{aligned}\end{equation}

{For}   $n=1,2,\ldots,$ $K>0$, and $\varepsilon>0$, the following inequalities hold:
\[
\begin{aligned}
I_2(n,K)&\ge
\inf\limits_{S\in\Sigma}\int_{S_{|f^{(n)}|< K}\cap
S_{f^{(n)}-f>-\varepsilon}}\left(f^{(n)}(s)-f(s)\right) \mu(ds)\\
&+\inf\limits_{S\in\Sigma}\int_{S_{|f^{(n)}|< K}\cap
S_{f-f^{(n)}>\varepsilon}}\left(f^{(n)}(s)-f(s)\right) \mu(ds)\\
&\ge -\varepsilon\mu(\S)-\int_{\S_{|f^{(n)}|< K}\cap
\S_{f-f^{(n)}>\varepsilon}} |f^{(n)}(s)| \mu(ds) -\int_{
\S_{f-f^{(n)}>\varepsilon}} |f(s)| \mu(ds).
\end{aligned}
\]
and, therefore,
\[
I_2(n,K) \ge  -\varepsilon\mu(\S) -K\mu(S_{f-f^{(n)}>\varepsilon})-\int_{
\S_{f-f^{(n)}>\varepsilon}}|f(s)| \mu(ds).
\]
Thus, due to
(\ref{eq:cfl2}) and $f\in L^1(\S;\mu)$,
\begin{equation}\label{eq:ui11b}
\begin{aligned}
\ilim\limits_{K\to+\infty}\ilim\limits_{n\to\infty} I_2(n,K)\ge 0.
\end{aligned}\end{equation}

{For  $n=1,2,\ldots$ and $K>0,$} the following {inequalities hold:}
\[\begin{aligned}
&I_3(n,K) \ge K\left(\mu^{(n)}(S_{f^{(n)}\ge K}\cap S_{f(s)\le
K})-\mu(S_{f^{(n)}\ge K}\cap S_{f(s)\le K})\right)\\
&-\sup\limits_{S\in\Sigma}\int_{S_{f^{(n)}\ge K}\cap S_{f(s)>K}}f(s)
\mu(ds)\ge -K\sup_{S\in
\Sigma}|\mu^{(n)}(S)-\mu(S)|-\int_{\S_{f(s)>K}}f(s) \mu(ds).
\end{aligned}
\]
Therefore, since
the sequence $\{\mu^{(n)}\}_{n=1,2,\ldots}$
converges in total variation to $\mu$ and $f\in L^1(\S)$, then
\begin{equation}\label{eq:ui11c}
\begin{aligned}
\ilim\limits_{K\to+\infty}\ilim\limits_{n\to\infty} I_3(n,K)\ge 0.
\end{aligned}\end{equation}
%

Inequalities (\ref{eq:ui11})--(\ref{eq:ui11c}) yield
(\ref{eq:ui10}). Therefore, statements (i) and (ii) of
Theorem~\ref{cfl} {imply} inequality (\ref{eq:cfl1}).\end{proof}

\begin{proof}[Proof of Theorem~\ref{cfl}] Theorem~\ref{cfl} {follows directly} from Lemmas~\ref{lem:1}--\ref{lem:4}; see also Remark~\ref{rem:lem:3}.
\end{proof}

\begin{proof}[Proof of Corollary~\ref{cor:cflnongen}]
Corollary~\ref{cor:cflnongen} {follows directly} from Theorem~\ref{cfl} because inequality (\ref{eq:ui}) holds for the sequence of nonnegative functions $\{f^{(n)}\}_{n=1,2,\ldots}$.
\end{proof}

\begin{proof}[Proof of Corollary~\ref{cor:cfl}]
Corollary~\ref{cor:cfl} {follows directly} from Theorem~\ref{cfl}.
\end{proof}

\begin{proof}[Proof of Corollary~\ref{cor:cflnon}]
Corollary~\ref{cor:cflnon} {follows directly} from Corollary~\ref{cor:cflnongen}.
\end{proof}

\begin{proof}[Proof of Corollary~\ref{cdct}] Theorem~\ref{cfl}, being applied to the functions $\{f,f^{(n)}\}_{n=1,2,\ldots}$ and $\{-f,-f^{(n)}\}_{n=1,2,\ldots}$, yields 
{Corollary~\ref{cdct}}.
\end{proof}

\begin{proof}[Proof of Corollary~\ref{cor:cdct}] Corollary~\ref{cor:cfl}, being applied to the functions $\{f,f^{(n)}\}_{n=1,2,\ldots}$ and $\{-f,-f^{(n)}\}_{n=1,2,\ldots}$, 
\end{proof}

\begin{proof}[Proof of Corollary~\ref{cor:RNin}] {If $\nu\in \M(\S)$, $\tilde{\nu}$ be a finite signed measure on $\S$, and ${ \Tilde \nu}\ll\nu$, then the Radon-Nikodym derivative $\frac{d{ \Tilde \nu}}{d\nu}$ is $\mu$-integrable, that is, $\frac{d{ \Tilde \nu}}{d\nu}\in L^1(\S;\nu)$. This is true because $\int_\S|\frac{d{ \Tilde \nu}}{d\nu}|d\nu=\|\nu\|<\infty.$} Set $f:=\frac{d{ \Tilde \mu}}{d\mu}$, $f^{(n)}:=\frac{d{ \Tilde \mu}^{(n)}}{d\mu^{(n)}},$ $n=1,2,\ldots\,$. Then Theorem~\ref{cfl} yields 
{Corollary~\ref{cor:RNin}}.
\end{proof}

\begin{proof}[Proof of Corollary~\ref{cor:RNeq}] Corollary~\ref{cor:RNin}, being applied to  $\{{ \Tilde \mu},{ \Tilde \mu}^{(n)},\mu,\mu^{(n)}\}_{n=1,2,\ldots}$ and\\ $\{-{ \Tilde \mu},-{ \Tilde \mu}^{(n)},\mu,\mu^{(n)}\}_{n=1,2,\ldots}$, yields 
{Corollary~\ref{cor:RNeq}}.
\end{proof}

\section{Counterexamples}

Example~\ref{exa0} describes a probability space $(\S,\Sigma,\mu)$ and a sequence $\{f,f^{(n)}\}_{n=1,2,\ldots}$ of uniformly bounded nonnegative measurable functions on it such that: (a) $\{f,f^{(n)}\}_{n=1,2,\ldots}$ satisfy inequality
(\ref{eq:cor:cfl1}); (b) inequality (\ref{eq:cfl2}) takes place for each $\varepsilon>0$; (c)
inequality (\ref{eq:cfl3}) does not hold for the function $f$ and the entire sequence
$\{f^{(n)}\}_{n=1,2,\ldots}$.  This example also demonstrates that
Corollary~\ref{cor:cflnon} is essentially a more exact statement {than} the classic  Fatou's lemma.

\begin{example}\label{exa0}
{\rm Let $\S=[0,1]$, $\Sigma$
be the Borel $\sigma$-field on $\S$,  $\mu^{(n)}=\mu$ {be} the Lebesgue
measure on $\S$,  $f\equiv1$, and  $f^{(n)}(s)=1-{\bf I}\{s\in
[\frac{j}{2^k}, \frac{j+1}{2^k}]\}$, where $k=[\log_2n]$, $j=n-2^k$,
$s\in\S$, and $n=1,2,\ldots\,.$ Then
\[
\lim\limits_{n\to\infty}\int_{\S}(f^{(n)}(s)- f(s) )^-\mu(ds)=\lim\limits_{n\to\infty}\frac{1}{2^{[\log_2n]}}=0,
\]
and, according to Remark~\ref{rem -}, inequality (\ref{eq:cor:cfl1}) holds. Moreover,
for each $\varepsilon>0$
\[
 \mu(\{s\in\S\,:\, f^{(n)}(s)\le f(s)-\varepsilon\})=\frac{1}{2^{[\log_2n]}} \to 0\mbox{ as }n\to\infty,
\]
that is, convergence in (\ref{eq:cfl2}) takes place for each $\varepsilon>0$.
Moreover,
\[\ilim\limits_{n\to\infty}f^{(n)}(s)=0< 1=f(s)\mbox{ for }\mu\mbox{-a.e. }s\in\S,
\]
that is, inequality (\ref{eq:cfl3}) does not hold for the function $f$ and for the entire sequence
$\{f^{(n)}\}_{n=1,2,\ldots}$.

Corollary~\ref{cor:cflnon} {yields} 
 \[
1= \ilim\limits_{n\to\infty}\int_{\S}f^{(n)}(s)
\mu(ds)\ge \int_{\S}f(s) \mu(ds)=1;
 \]
see equality (\ref{eq:cor:cfl1}) and Remark~\ref{rem -}. But the classic  Fatou's lemma implies
\[
1= \ilim\limits_{n\to\infty}\int_{\S}f^{(n)}(s)
\mu(ds)\ge \int_{\S}\ilim\limits_{n\to\infty}f^{(n)}(s) \mu(ds)=0.
\]
Therefore, Corollary~\ref{cor:cflnon} is a more exact statement {than} the classic  Fatou's lemma. \hfill$\Box$
 }
\end{example}

{The following {three} examples demonstrate that the uniform Fatou's lemma does not hold, if convergence of measures in total variation is relaxed to
setwise convergence.  In particular,  the necessary condition fails in {Examples~\ref{exa2} and \ref{exa4}}, and   the sufficient condition fails in Example~\ref{exa3}.  As mention above, Fatou's lemma, which is a sufficient condition for inequality \eqref{eq:FLcl1}, which is weaker that inequality \eqref{eq:cfl1} in the uniform Fatou's lemma, holds for setwise converging measures and, if the notion of a limit of a function is appropriately modified, it also holds for weakly converging measures; see Royden~\cite[p. 231]{Ro}, Serfozo~\cite{Se},  Feinberg et al.~\cite{TVP}, and references therein.}

{Example~\ref{exa2} demonstrates that,  if  convergence in total
variation of finite measures $\{ \mu^{(n)}\}_{n=1,2,\ldots}$ to $
\mu$ in Corollary~\ref{cdct} is relaxed to  setwise convergence, equality (\ref{eq:cdct1}) { implies neither statement (i) nor statement (ii) from
Theorem~\ref{cfl}, and therefore neither statement (i) nor statement (ii) from  Corollary~\ref{cdct} holds}.
Thus, inequality (\ref{eq:cfl1}) does not yield {either
 statement (i) or statement (ii)} from Theorem~\ref{cfl}, if the convergence in total
variation of finite measures $\{ \mu^{(n)}\}_{n=1,2,\ldots}$ to $
\mu$ in
Theorem~\ref{cfl} 
is relaxed to   setwise convergence.}

\begin{example}\label{exa2}
{\rm
Let $\S=[0,1]$,  $\Sigma=\B(\S)$ be a Borel $\sigma$-algebra
on $\S$,
\begin{align*}
g^{(n)}(s):=\begin{cases} \frac1n, &\textrm{if $2k/2^n<s<(2k+1)/2^n$ for
$k=0,1,\ldots,2^{n-1}-1$;}\\
2-\frac1n, &\textrm{otherwise,}
\end{cases}
\end{align*}
$f^{(n)}(s):=-1/g^{(n)}(s)$, $s\in[0,1]$, $n=1,2,\ldots,$ be the sequence of
measurable functions,  $ \mu$ be the Lebesgue measure on $[0,1]$,
and  $f\equiv -1$. Consider the sequence of probability measures $
\mu^{(n)}$  on $[0,1]$, $n=1,2,\ldots,$ defined as
\begin{equation}\label{eq:exa22}
 \mu^{(n)}(S):=\int_{S}g^{(n)}(s) \mu(ds),\quad S\in \Sigma.
\end{equation}

The sequence $\{ \mu^{(n)}\}_{n=1,2,\ldots}$ converges setwise to $ \mu$
as $n\to\infty$.
Indeed, {according to Feinberg et al.~\cite[Theorem 2.3]{POMDP2},}
measures $\mu^{(n)}$ converge setwise to the measure $\mu$, if
$\mu^{(n)}(C)\to \mu(C)$ for each open set $C$ in $[0,1]$.  Since
$\mu^{(n)}(\{0\})=\mu(\{0\})=\mu^{(n)}(\{1\})=\mu(\{1\}),$ $n=1,2,\ldots,$ then
$\mu^{(n)}(C)\to \mu(C)$ for each open set $C$ in $[0,1]$ if and only
if $\mu^{(n)}(C)\to \mu(C)$ for each open set $C$ in $(0,1).$
 Choose an arbitrary open set $C$ in $(0,1).$ Then $C$ is a union of a
countable  set of open disjoint intervals $(a_i,b_i)$. Therefore, for each $\varepsilon>0$ there is a finite number $n_\varepsilon$ of open intervals
$\{(a_i,b_i):i=1,\ldots,n_\varepsilon\}$ such that $\mu(C\setminus C_\varepsilon)\le \varepsilon$, where $C_\varepsilon=\cup_{i=1}^{n_\varepsilon}
(a_i,b_i).$ Due to $|g^{(n)}|\le 2$, we obtain that $\mu^{(n)}(C\setminus C_\varepsilon)\le 2\varepsilon$ for each $n=1,2,\ldots\ .$ Since
$|\mu^{(n)}((a,b))-\mu((a,b))|< 1/2^{n-1}, $ $n=1,2,\ldots,$ for each interval $(a,b)\subseteq (0,1),$  this implies that
$|\mu(C_\varepsilon)-\mu^{(n)}(C_\varepsilon)|< \varepsilon$ if $n\ge N_\varepsilon$, where $N_\varepsilon$ is each natural number
satisfying $1/2^{N_\varepsilon-1}\le \varepsilon.$ 
Therefore, if $n\ge N_\varepsilon$ then $|\mu^{(n)}(C)-\mu(C)|\le |\mu^{(n)}(C_\varepsilon)-\mu(C_\varepsilon)|+\mu(C\setminus C_\varepsilon)+\mu^{(n)}(C\setminus
C_\varepsilon)< 4\varepsilon.$  This implies that $\mu^{(n)}(C)\to \mu(C)$ as $n\to\infty$.  Thus $\mu^{(n)}$ converge setwise to $\mu$ as $n\to\infty$.

{Observe that for $S_n=\cup_{k=0}^{2^{n-1}-1}\,[2k/2^n,(2k+1)/2^n],$ $n=1,2,\ldots,$
\begin{equation}\label{eq:negg}
\mu^{(n)}(S_n)-\mu(S_n)=-(\frac{1}{2}-\frac{1}{2n}).
\end{equation}
So, the sequence $\{ \mu^{(n)}\}_{n=1,2,\ldots}$ does not converge in total variation to $ \mu$
because
\[
{\rm dist}( \mu^{(n)},\mu)\ge \frac{1}{2}-\frac{1}{2n},\quad n=1,2,\ldots\ .
\]}

{Equality (\ref{eq:cdct1}) holds since
\begin{equation}\label{eq3.3accur}
\int_{S}f^{(n)}(s) \mu^{(n)}(ds)=
\int_{S}f(s) \mu(ds)\quad\ \textrm{for all}  \ S\in\Sigma, \ n=1,2,\ldots,
\end{equation}
which is stronger than {(\ref{eq:cdct1})}.
 Thus, inequality
(\ref{eq:cfl1})  also holds.}

Statement (i) from
Theorem~\ref{cfl} does not hold since
 \[
 \mu(\{s\in\S\,:\, f^{(n)}(s)\le f(s)-1\})=\frac12, \quad
n=2,3,\ldots\ . \]
Thus statement (i) from  Corollary~\ref{cdct} does not hold either.

Statement (ii) from
Theorem~\ref{cfl} does not hold since
 \[
 \inf\limits_{n=1,2,\ldots
}\int_{\S}f^{(n)}(s){\bf I}\{s\in\S\,:\, f^{(n)}(s)\le -K\}
\mu^{(n)}(ds)=\frac12,\quad K>1. \]
Thus statement (ii) from  Corollary~\ref{cdct}  does not hold either. \hfill$\Box$

}
\end{example}

Example~\ref{exa4} demonstrates that,  if  convergence in total
variation of finite measures $\{ \mu^{(n)}\}_{n=1,2,\ldots}$ to $
\mu$ in Corollary~\ref{cor:cflnongen}, {in which the functions $f^{(n)}$ are assumed to be nonnegative,} is relaxed to  setwise convergence, inequality (\ref{eq:cfl1}) does not imply statement (i) from
Theorem~\ref{cfl}.

\begin{example}\label{exa4}
{\rm Let $\S=[0,1]$,  $\Sigma=\B(\S)$ be a Borel $\sigma$-algebra
on $\S$,
\begin{align}\label{eq3.3forg}
g^{(n)}(s):=\begin{cases} \frac12, &\textrm{if $2k/2^n<s<(2k+1)/2^n$ for
$k=0,1,\ldots,2^{n-1}-1$;}\\
\frac32, &\textrm{otherwise,}
\end{cases}
\end{align}
$f^{(n)}(s):=1/g^{(n)}(s)$, $s\in[0,1]$, $n=1,2,\ldots,$ be the sequence of
measurable functions,  $ \mu$ be the Lebesgue measure on $[0,1]$,
and  $f\equiv 1$. {For the functions $g^{(n)}$ from \eqref{eq3.3forg}, consider the sequence of probability measures $
\mu^{(n)}$  on $[0,1]$,  $n=1,2,\ldots,$ defined in (\ref{eq:exa22}).}

{The sequence $\{ \mu^{(n)}\}_{n=1,2,\ldots}$ converges setwise to $ \mu$
as $n\to\infty,$  and \eqref{eq3.3accur} holds. These facts follows from the same arguments as in Example~\ref{exa2}. In view of
\eqref{eq3.3accur}, inequality (\ref{eq:cfl1}) holds.}
%
Statement (i) from
Theorem~\ref{cfl} does not hold since
 $\mu(\{s\in\S\,:\, f^{(n)}(s)\le f(s)-\frac13\})=\frac12$,
$n=1,2,\ldots\ .$
\hfill$\Box$}\end{example}

Example~\ref{exa3} demonstrates that
statements (i) and (ii) from Corollary~\ref{cdct} do not imply inequality (\ref{eq:cfl1}) {and therefore they do not imply
equality (\ref{eq:cdct1}), if  convergence} in total
variation of finite measures $\{ \mu^{(n)}\}_{n=1,2,\ldots}$ to $
\mu$ in Corollary~\ref{cdct} is relaxed {to setwise }convergence. Therefore, statements (i) and (ii) from Theorem~\ref{cfl}
do not yield inequality (\ref{eq:cfl1}), if the convergence in total
variation of finite measures $\{ \mu^{(n)}\}_{n=1,2,\ldots}$ to $
\mu$ in  Theorem~\ref{cfl}  is relaxed {to setwise} convergence.

\begin{example}\label{exa3}
{\rm
Consider a measurable space $(\S,\Sigma)$ and a sequence $\{\mu^{(n)}\}_{n=1,2,\ldots}\subset \M(\S)$ that converges setwise to a measure $\mu$ on $\S$ such that \[
\ilim_{n\to\infty}\inf_{S\in \Sigma} \left(\mu^{(n)}(S)-\mu(S) \right)<0.
\]
For example, in view of \eqref{eq:negg}, the measurable spaces and measures defined in Example~\ref{exa2} can be considered for this example. Let $f=f^{(n)}\equiv 1$, $n=1,2,\ldots.$

Note that, statements (i) and (ii) from  Corollary~\ref{cdct} hold. Thus statements (i) and (ii) from  Theorem~\ref{cfl} hold.
Moreover, since
\[
\ilim\limits_{n\to\infty}\inf\limits_{S\in\Sigma}\left(\int_{S}f^{(n)}(s)
\mu^{(n)}(ds)- \int_{S}f(s) \mu(ds)\right)=\ilim_{n\to\infty}\inf_{S\in \Sigma} \left(\mu^{(n)}(S)-\mu(S) \right)<0,
\]
then neither inequality (\ref{eq:cfl1}) nor equality (\ref{eq:cdct1}) holds. \hfill$\Box$
 }
\end{example}

{ We remark
that  the functions $f$ and $f^{(n)},$ $n=1,2,\ldots,$ are nonnegative in  Example~\ref{exa3}.  Therefore, unlike the case of measures
converging in  total variation described in Corollary~\ref{cdct}, even for nonnegative functions $f$ and $f^{(n)},$ the validity of statement (i) from Theorem~\ref{cfl} is not necessary for the validity of inequality (\ref{eq:cfl1}) in the case of setwise converging measures.}

%

\vspace{.3cm}
 {\bf Acknowledgement.}  Research of the first author was
partially supported by NSF grant CMMI-1335296.

\end{document}